\theoremstyle{definition}
\newtheorem{theorem}{Theorem}
\newtheorem*{theorem*}{Theorem}
\newtheorem{lemma}{Lemma}
\newtheorem*{proposition}{Proposition}
\newtheorem*{definition}{Definition}
\newtheorem{conjecture}{Conjecture}
\def\ps@pprintTitle{%
  \let\@oddhead\@empty
  \let\@evenhead\@empty
  \let\@oddfoot\@empty
  \let\@evenfoot\@oddfoot
}
\renewcommand{\leq}{\leqslant}
\renewcommand{\geq}{\geqslant}
\renewcommand{\phi}{\varphi}
\DeclareMathOperator{\Nu}{\mathcal{V}}
\DeclareMathOperator{\Q}{\mathbb{Q}}
\DeclareMathOperator{\R}{\mathbb{R}}
\DeclareMathOperator{\N}{\mathbb{N}}
\DeclareMathOperator{\x}{\textbf{x}}
\DeclareMathOperator{\y}{\textbf{y}}
\DeclareMathOperator{\e}{\textbf{e}}
\DeclareMathOperator{\zero}{\textbf{0}}
\DeclareMathOperator{\diag}{diag}
\DeclareMathOperator{\embed}{\stackrel{\Q}{\hookrightarrow}}
\DeclareMathOperator{\notembed}{\stackrel{\Q}{\not \hookrightarrow}}
\DeclareMathOperator{\eqQ}{\stackrel{\Q}{\sim}}
\DeclareMathOperator{\mcQ}{\mathcal{Q}}
\title{On distance graphs in rational spaces}
\author{A.\,A. Sokolov\footnote{Moscow Institute of Physics and Technology, Department of Discrete Mathematics,  \href{mailto:sokolov.aa@phystech.edu}{sokolov.aa@phystech.edu}
}}
\date{}
\begin{document}

\begin{abstract}
For any positive definite rational quadratic form $q$ of $n$ variables let $G(\mathbb{Q}^n, q)$ denote the graph with vertices $\Q^n$ and $x, y \in \Q^n$ connected iff $q(x - y) = 1$. This notion generalises standard Euclidean distance graphs.
In this article we study these graphs and show how to find the exact value of clique number of the $G(\mathbb{Q}^n, q)$.

We also prove rational analogue of the Beckman--Quarles theorem that any unit-preserving mapping of $\mathbb{Q}^n$ is an isometry.

\end{abstract}


\maketitle

\section{Introduction}\label{intro_chapter}



Let $\mcQ_n$ be a set of all rational positive definite quadratic forms on $n$ variables.

\begin{definition}
For every $n \in \N$ and $q \in \mcQ_n$ graph $G(\Q^n, q) = (V, E)$ is defined as follows:
\[
V = \Q^n, \quad E = \{(\x, \y) \quad | \quad q(\x - \y) = 1\}.
\]
\end{definition}

This definition generalises standard Euclidean distance graph.
Indeed, if $I_n(x_1, \ldots, x_n) = x_1^2 + \ldots + x_n^2$ is standard Euclidean quadratic form, then  $G(\Q^n, I_n)$ and $G(\Q^n, \frac{1}{d}I_n)$ are standard Euclidean distance graphs with distance 1 and $\sqrt{d}$ respectively.

In section \ref{defs_and_propert_chapter} we study basic properties of such graphs i.e. the non-emptiness and the connectedness. 

In section \ref{clique_chapter} we study clique numbers of such graphs, i.e. the maximal number of vertices connected to each other.
An algorithm on finding exact value of clique number of $G(\Q^n, q)$ any $n$ and $q$ is presented.

This algorithm was implemented for the case $q = \frac{1}{d}I_n$ using SageMath, respective computer program can be found at \cite{SokolovGit}.
More about values of $\omega(G(\Q^n, I_n))$ and $\omega(G(\Q^n, \frac{1}{d}I_n))$ reader can find in \cite{chilakamarri1988unit}, \cite{ElsholtzKlotz2005}, \cite{bau2021single}, \cite{Noble2018}, \cite{noble2021embedding}.

In section \ref{isom_chapter} we state the following question:

"Given two rational quadratic forms $q_1$ and $q_2$, are graphs $G(\Q^n, q_1)$ and $G(\Q^n, q_2)$ isomoprhic or not? If so, what do these isomorphisms look like?"

This question is closely related to the rational analogue of Beckman--Quarles Theorem which states that any automorphism of $G(\Q^n, I_n)$ is an isometry.
This analogue is proven for all $n$.

More about Beckman--Quarles theorem reader can find in \cite{beckman1953isometries}, \cite{zaks2001discrete}, \cite{zaks2003beckman}, \cite{zaksconnelly2003beckman}.





Through this article notions and theorems from the theory of rational quadratic forms will be used which can be found in \ref{appendix}.

\section{Definitions and main properties}\label{defs_and_propert_chapter}

Let $\mcQ_n$ be the set of all rational positive definite quadratic forms on $n$ variables. Let $\mcQ = \cup_{n = 1}^{\infty} \mcQ_n$.

\begin{definition}\label{main_def}
For every  $q \in \mcQ_n$ a $G(\Q^n, q) = (V, E)$ is defined as follows:
\[
V = \Q^n, \quad E = \{(\x, \y) \quad | \quad q(\x - \y) = 1\}.
\]
\end{definition}



\begin{definition}
    Forms $q_1$ and $q_2$ are called \textit{rationally equivalent} and denoted as  $q_1 \eqQ q_2$ iff $\dim q_1 = \dim q_2$ and  there exists linear transformation $T \in GL\left(n, \Q\right)$ such that  $q_1(\x) = q_2(T\x)$ and call such forms .
\end{definition}

\begin{lemma} \label{isom}
	If $q_1 \eqQ q_2$ then corresponding graphs $G(\Q^{\dim q_1}, q_1)$ and $G(\Q^{\dim q_1} q_2)$ are isomorphic.
\end{lemma}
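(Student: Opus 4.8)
The plan is to show that the linear map $T \in GL(n,\Q)$ witnessing the rational equivalence $q_1 \eqQ q_2$ induces the desired graph isomorphism. Concretely, since $q_1(\x) = q_2(T\x)$ for all $\x \in \Q^n$, I would define $\phi : \Q^n \to \Q^n$ by $\phi(\x) = T\x$. First I would check that $\phi$ is a bijection on the vertex set: this is immediate because $T \in GL(n,\Q)$, so $T$ is invertible over $\Q$ and hence restricts to a bijection $\Q^n \to \Q^n$, with inverse $\x \mapsto T^{-1}\x$.

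Next I would verify that $\phi$ preserves adjacency in both directions. Given $\x, \y \in \Q^n$, we have $(\x,\y) \in E(G(\Q^n, q_1))$ iff $q_1(\x - \y) = 1$. By linearity of $T$ and the defining relation of rational equivalence, $q_1(\x - \y) = q_2(T(\x - \y)) = q_2(T\x - T\y) = q_2(\phi(\x) - \phi(\y))$, so $q_1(\x - \y) = 1$ iff $q_2(\phi(\x) - \phi(\y)) = 1$, i.e. iff $(\phi(\x), \phi(\y)) \in E(G(\Q^n, q_2))$. Hence $\phi$ is an isomorphism of graphs.

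The only subtlety worth flagging is the hypothesis that $q_1$ and $q_2$ have the same dimension, which the definition of rational equivalence includes; this is what makes $T$ a square matrix and allows $\phi$ to be a map between the two vertex sets $\Q^n$ with the same $n = \dim q_1 = \dim q_2$. There is essentially no obstacle here — the statement is a direct unwinding of the definitions, and the ``hard part,'' if any, is purely bookkeeping: making sure one uses the equality of dimensions and the invertibility of $T$ at the right moments. I would also remark that no positive-definiteness or any arithmetic property of the forms is used in this lemma; it holds for the relation $q_1(\x) = q_2(T\x)$ with any invertible $T$.
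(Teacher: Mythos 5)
Your proposal is correct and follows exactly the paper's own argument: use the invertible linear map $T$ with $q_1(\x) = q_2(T\x)$ as the vertex bijection and check that adjacency is preserved in both directions via $q_1(\x - \y) = q_2(T\x - T\y)$. No differences worth noting.
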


\begin{proof}
    Let $E_1$, $E_2$ be edge sets of graphs $G(\Q^n, q_1)$ and $G(\Q^n, q_2)$ respectively.
    Let $T \in GL\left(n, \Q\right)$ such that  $q_1(\x) = q_2(T\x)$. We can see that
    \[ 
        (\x, \y) \in E_1 \Longleftrightarrow q_1(\x - \y) = 1 \Longleftrightarrow q_2(T\x - T\y) = 1 \Longleftrightarrow (T\x, T\y) \in E_2,
    \]
    Since $T$ is invertible we can see that $T$ is indeed an isomorphism of $G(\Q^n, q_1)$ and $G(\Q^n, q_2)$.
\end{proof}

\begin{definition}
    We will write $q_1 \embed q_2$ if $q_1$ can be built up to the equivalent of $q_2$, i.e. there exists $r \in \mcQ_{\dim q_1 - \dim q_2}$ such that quadratic form $h(\x, \y) := q_1(\x) + r(\y)$ holds $h \eqQ q_2$.
\end{definition}

\begin{lemma}\label{embed}
	If $q_1 \embed q_2$ then graph $G(\Q^{\dim q_1}, q_1) $ is a subgraph of $G(\Q^{\dim q_2}, q_2)$.
\end{lemma}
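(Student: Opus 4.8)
The plan is to exhibit an explicit injective graph homomorphism realising $G(\Q^{\dim q_1}, q_1)$ inside $G(\Q^{\dim q_2}, q_2)$, by combining the ``padding by zeros'' inclusion map with Lemma~\ref{isom}. Write $n_1 = \dim q_1$, $n_2 = \dim q_2$ and $m = n_2 - n_1$. By the definition of $q_1 \embed q_2$ there is $r \in \mcQ_{m}$ such that $h(\x, \y) := q_1(\x) + r(\y)$ satisfies $h \eqQ q_2$; note that $h$ is genuinely positive definite of dimension $n_2$, being the sum of the positive definite forms $q_1$ and $r$ in disjoint sets of variables, so $h \in \mcQ_{n_2}$ and Lemma~\ref{isom} applies to $h$ and $q_2$.

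First I would consider the map $\iota \colon \Q^{n_1} \to \Q^{n_2}$ given by $\iota(\x) = (\x, \zero)$ with $\zero \in \Q^{m}$, which is clearly injective. For $\x, \y \in \Q^{n_1}$ one computes $h(\iota(\x) - \iota(\y)) = h(\x - \y, \zero) = q_1(\x - \y) + r(\zero) = q_1(\x - \y)$, using $r(\zero) = 0$. Hence $(\x, \y)$ is an edge of $G(\Q^{n_1}, q_1)$ if and only if $(\iota(\x), \iota(\y))$ is an edge of $G(\Q^{n_2}, h)$; in particular $\iota$ identifies $G(\Q^{n_1}, q_1)$ with the subgraph (in fact the induced subgraph) of $G(\Q^{n_2}, h)$ on the vertex set $\Q^{n_1} \times \{\zero\}$.

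Finally, since $h \eqQ q_2$, Lemma~\ref{isom} provides an isomorphism $G(\Q^{n_2}, h) \cong G(\Q^{n_2}, q_2)$; composing it with $\iota$ yields an embedding of $G(\Q^{n_1}, q_1)$ as a subgraph of $G(\Q^{n_2}, q_2)$, as claimed. There is no genuinely hard step here: the only points needing a little care are the bookkeeping of the dimensions (the auxiliary form $r$ has $n_2 - n_1$ variables) and verifying that $h$ really is an element of $\mcQ_{n_2}$ so that the previously established Lemma~\ref{isom} may be invoked.
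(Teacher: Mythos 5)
Your proof is correct and follows essentially the same route as the paper: both use the decomposition $h = q_1 + r$ with $h \eqQ q_2$, identify $G(\Q^{\dim q_1}, q_1)$ with the restriction of $G(\Q^{\dim q_2}, h)$ to the first $\dim q_1$ coordinates (your padding map $\iota$), and then apply Lemma~\ref{isom}. You merely write out the verification $h(\iota(\x) - \iota(\y)) = q_1(\x - \y)$ explicitly, and you correctly state the dimension of $r$ as $\dim q_2 - \dim q_1$ where the paper has a sign slip.
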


\begin{proof}
    By definition there exists $r \in \mcQ_{\dim q_1 - \dim q_2}$ such that quadratic form $h(\x, \y) := q_1(\x) + r(\y)$ holds $h \eqQ q_2$. Hence $G(\Q^{\dim q_2}, q_2) \simeq G(\Q^{\dim h}, h)$. If we constrain latter graph on the first $\dim q_1$ coordinates we will get $G(\Q^{\dim q_1}, q_1)$.
    
\end{proof}

Let us state a lemma that we are going to use later.
\begin{lemma}[Meyer, \cite{Serre1973}, chapter IV, theorem 8, corollary 2] \label{Meyer}
A quadratic form of rank $\geq 5$  represents 0 in $\Q$ if and only of it represents $0$ in $\R$ (i.e. there exists $\x \in \R^n$, $\x \ne \zero$ such that $q(\x) = 0$).
\end{lemma}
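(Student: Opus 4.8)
The plan is to deduce this from the Hasse--Minkowski local--global principle for quadratic forms over $\Q$, which is one of the classical facts from the theory of rational quadratic forms collected in the appendix. One implication is trivial: a nontrivial rational zero of $q$ is in particular a nontrivial real zero, since $\Q \subset \R$. So the whole content lies in the converse, and I would organise the argument around the following skeleton.

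First I would recall Hasse--Minkowski: a nondegenerate quadratic form $q$ over $\Q$ represents $0$ nontrivially if and only if it represents $0$ nontrivially over every completion $\Q_v$ of $\Q$, where $v$ runs over the finite primes $p$ together with the real place (so that $\Q_\infty = \R$). Granting this, the hypothesis that $q$ has a nontrivial zero over $\R$ settles the archimedean place at once, and it remains only to check that $q$ has a nontrivial zero over $\Q_p$ for every prime $p$. This is exactly where the rank hypothesis $\ge 5$ enters, through the purely local statement that \emph{every nondegenerate quadratic form of rank $\ge 5$ over $\Q_p$ is isotropic} (equivalently, the $u$-invariant of $\Q_p$ equals $4$).

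To prove that local statement for $p$ odd, I would diagonalise $q$ over $\Q_p$ and rescale each coordinate by a power of $p$ so that every diagonal coefficient has $p$-adic valuation $0$ or $1$; grouping the coefficients by the parity of their valuation, at least one group has $\ge 3$ entries, i.e. $q$ contains, up to a scalar, a unimodular subform $a_1x_1^2 + \dots + a_kx_k^2$ with $p \nmid a_i$ and $k \ge 3$. Reducing this subform modulo $p$ gives a form in $\ge 3$ variables over $\F_p$, which is isotropic by the Chevalley--Warning theorem, and a nontrivial zero modulo $p$ with at least one coordinate a $p$-adic unit lifts to a genuine $\Q_p$-zero by Hensel's lemma. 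The prime $p = 2$ has to be treated separately, by an elementary but slightly fussy case analysis exploiting that $\Q_2^\times/(\Q_2^\times)^2$ has only eight classes, so that any form of rank $\ge 5$ over $\Q_2$ can be manipulated into one visibly representing $0$.

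The main obstacle is precisely this local analysis at $p = 2$: Chevalley--Warning and the clean single-variable Hensel lift are not directly available there, and one argues instead by explicit bookkeeping over the square classes of $\Q_2$. Everything else is a formal invocation of Hasse--Minkowski together with the standard reduction-mod-$p$ argument. Since the result is classical (it is \cite{Serre1973}, chapter IV, theorem~8, corollary~2, and is due to Meyer), in the paper it suffices to cite it; the sketch above is the route I would follow to reprove it from scratch.
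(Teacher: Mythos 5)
The paper offers no proof of this lemma at all: it is stated as a citation to Serre, and that is entirely appropriate for a classical result of this kind. Your sketch is nonetheless a correct outline of the standard argument, and it is essentially the one in the cited source: reduce to Hasse--Minkowski (the real place is handled by hypothesis) plus the purely local fact that every nondegenerate form of rank $\geq 5$ over $\Q_p$ is isotropic. Your treatment of odd $p$ --- rescale to make all diagonal coefficients of valuation $0$ or $1$, pigeonhole to extract a unimodular subform in $\geq 3$ variables, apply Chevalley--Warning over $\F_p$ and lift by Hensel --- is sound; note that the nontrivial zero mod $p$ automatically has a unit coordinate, which is what makes the Hensel lift go through. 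The one place where your route genuinely diverges from Serre's is the local statement at $p=2$: rather than the ``fussy bookkeeping over square classes'' you propose, Serre deduces isotropy in rank $\geq 5$ uniformly for all $p$ (including $p=2$) from the classification of forms over $\Q_p$ by the invariants $d$ and $\varepsilon$, together with the fact that a rank-$4$ form represents every element of $\Q_p^*$. If you ever needed to write the proof out in full, that classification-based argument is the cleaner path, since it sidesteps the dyadic case analysis entirely. For the purposes of this paper, the citation suffices and nothing more is required.
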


\begin{theorem}
For any $n \geq 4$ and $q \in \mcQ_n$ graph $G(\Q^n, q)$ is nonempty.
\end{theorem}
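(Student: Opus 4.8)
The goal is to find $\x, \y \in \Q^n$ with $q(\x - \y) = 1$, which by substituting $\mathbf{z} = \x - \y$ is equivalent to showing that $q$ represents $1$ over $\Q$. So I need: every positive definite rational quadratic form $q$ in $n \geq 4$ variables represents $1$ over $\Q$.

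The strategy is to reduce the representation problem to an isotropy problem and then invoke Meyer's theorem (Lemma \ref{Meyer}). A standard fact from the theory of quadratic forms is that $q$ represents a nonzero rational number $a$ if and only if the form $q(\mathbf{z}) - a t^2$ (in $n+1$ variables) represents $0$ nontrivially — the "only if" is immediate, and the "if" direction needs a short argument to handle the case where the isotropic vector has $t = 0$, but since $q$ is positive definite it has no nontrivial zero, so any isotropic vector of $q - at^2$ must have $t \neq 0$, and we can scale to get a representation of $a$. Apply this with $a = 1$: the form $q' := q \perp \langle -1 \rangle$ has rank $n + 1 \geq 5$, so by Meyer's theorem $q'$ represents $0$ over $\Q$ iff it represents $0$ over $\R$. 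Over $\R$, since $q$ is positive definite, $q'$ has signature $(n, 1)$, hence is indefinite and therefore isotropic over $\R$. Thus $q'$ is isotropic over $\Q$, so $q$ represents $1$ over $\Q$, and taking $\y = \zero$, $\x$ a rational point with $q(\x) = 1$ gives an edge.

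The main obstacle — really the only subtle point — is justifying the equivalence between "$q$ represents $1$" and "$q \perp \langle -1\rangle$ represents $0$", and in particular being careful that positive definiteness of $q$ rules out the degenerate isotropic vectors. I would state this as a small auxiliary observation before applying Meyer. Everything else (rank $\geq 5$, indefiniteness over $\R$) is routine. One should also note the hypothesis $n \geq 4$ is exactly what makes the auxiliary form have rank $\geq 5$, which is the threshold in Meyer's theorem; for $n \leq 3$ the statement can fail (e.g. suitably scaled forms), which is presumably why the theorem is stated only for $n \geq 4$.
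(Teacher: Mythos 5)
Your proposal is correct and follows essentially the same route as the paper: form the auxiliary indefinite form $q - x_{n+1}^2$ of rank $n+1 \geq 5$, apply Meyer's theorem to get a nontrivial rational isotropic vector, use positive definiteness of $q$ to conclude the last coordinate is nonzero, and rescale. The paper's proof handles the "degenerate isotropic vector" point in exactly the same way you flag it.
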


\begin{proof}
    $G(\Q^n, q)$ is nonempty iff there exists rational solution to $q(\x) = 1$. Consider following quadratic form

\[
    r(x_1, \ldots, x_n, x_{n+1}) = q(x_1, \ldots, x_n) - x_{n+1}^2
\]
This is indefinite quadratic form of rank $n + 1 \geq 5$. Hence by lemma \ref{Meyer} there exists nontrivial solution $(x_1, \ldots, x_{n+1})$ to the equation $r(x_1, \ldots, x_{n+1}) = 0$. It is easy to see that $x_{n+1} \ne 0$ (otherwise $q(x_1, \ldots, x_n) = 0$ hence $x_1 = \ldots = x_n = 0$ ) and $q\left( \frac{x_1}{x_{n+1}}, \ldots,  \frac{x_n}{x_{n+1}}\right) = 1$.
\end{proof}

The following statement shows that the latter theorem is exact, i.e. for $n < 4$ there exists $q \in \mcQ_n$ such that $G(\Q^n, q)$ is empty.

\begin{proposition}\label{23 and 233 
}
	Graphs $G(\Q^2, 2x^2 + 3y^2)$ and $G(\Q^3, 2x^2 + 3y^2 + 3z^2)$ are empty.
\end{proposition}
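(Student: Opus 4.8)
The plan is to show that neither equation $2x^2+3y^2=1$ nor $2x^2+3y^2+3z^2=1$ has a rational solution, and the natural tool is a local obstruction at the prime $3$ (equivalently, reasoning modulo powers of $3$). First I would clear denominators: a rational solution of $2x^2+3y^2=1$ yields, after writing $x=a/d$, $y=b/d$ in lowest terms with $d>0$, an integer solution of $2a^2+3b^2=d^2$ with $\gcd(a,b,d)=1$. Now I would analyze this modulo $3$: the equation becomes $2a^2\equiv d^2\pmod 3$, i.e. $d^2\equiv -a^2\pmod 3$. Since $-1$ is not a quadratic residue mod $3$, this forces $a\equiv d\equiv 0\pmod 3$. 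Then $3\mid d^2-2a^2=3b^2$ trivially, but reducing modulo $9$: $9\mid d^2$ and $9\mid 2a^2$, so $9\mid 3b^2$, hence $3\mid b^2$, hence $3\mid b$ — contradicting $\gcd(a,b,d)=1$.

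For the second form, the same opening move gives an integer solution of $2x^2+3y^2+3z^2=w^2$ with $\gcd(x,y,z,w)=1$. Reducing modulo $3$ gives $2x^2\equiv w^2\pmod 3$, which as above forces $3\mid x$ and $3\mid w$. Writing $x=3x'$, $w=3w'$ and substituting yields $18x'^2+3y^2+3z^2=9w'^2$, i.e. $6x'^2+y^2+z^2=3w'^2$. Reducing this modulo $3$: $y^2+z^2\equiv 0\pmod 3$; since a sum of two squares is $\equiv 0\pmod 3$ only when both are $\equiv 0$, we get $3\mid y$ and $3\mid z$. Combined with $3\mid x$, this contradicts $\gcd(x,y,z,w)=1$. (Alternatively one can phrase both arguments as: the $3$-adic valuation of the two sides cannot match, using $v_3$ of a square being even.)

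I would then remark that this is exactly the statement that these forms do not represent $1$ over $\Q_3$, so the obstruction is genuinely local and cannot be removed; and that together with Lemma~\ref{embed} (or directly) it shows the bound $n\ge 4$ in the previous theorem is sharp, since any $q\in\mcQ_2$ or $q\in\mcQ_3$ not representing $1$ over $\Q$ gives an empty graph.

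The main obstacle is essentially bookkeeping: making sure the descent step is set up so that a single contradiction with the coprimality condition falls out cleanly, rather than needing an infinite descent. Choosing the reduction to be modulo $9$ (for the first form) and a one-step substitution $x=3x'$, $w=3w'$ (for the second) is what keeps this finite. There is no deep difficulty here — the only thing to be careful about is the quadratic-residue facts mod $3$ ($-1\notin(\F_3^\times)^2$, and $a^2+b^2\equiv 0\pmod 3\Rightarrow 3\mid a,\,3\mid b$), which are elementary.
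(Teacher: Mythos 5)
Your proposal is correct and uses essentially the same argument as the paper: a descent modulo $3$ and then modulo $9$, phrased via a coprimality normalization rather than the paper's ``minimal solution'' formulation. The only cosmetic difference is that you treat the binary form separately, whereas the paper proves emptiness of $G(\Q^3, 2x^2+3y^2+3z^2)$ and deduces the binary case by restricting to the subgraph $z=0$.
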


\begin{proof}
    Assume the contrary, i.e. there exists a rational solution to $2x^2 + 3y^2 + 3z^2 = 1$. Let $p, q, r, s$ be  minimal integers such that  \[ 2p^2 + 3q^2 + 3r^2 = s^2 \] 
    Then $2p^2 - s^2$ is divisible by 3 which means that $p$ and $s$ are divisible by 3. Then $2p^2 - s^2$ is divisible by 9 and $q^2 + r^2$ is divisible by 3. Then $q$ and $r$ are divisible by 3.
    This contradicts to the minimality of $p, q, r, s$ (we can divide them by 3).

    Since $G(\Q^2, 2x^2 + 3y^2)$ is a subgraph of $G(\Q^3, 2x^2 + 3y^2 + 3z^2)$, it is also empty.
\end{proof}

Next lemma gives the exact criteria of $G(\Q^n, q)$ nonemptiness for any $n$ and $q$.

\begin{lemma}\label{I_1 in q}
    Graph $G(\Q^n, q)$ is nonempty iff $I_1 \embed q$ or equivalently there exists $r \in \mcQ_{n - 1}$ such that 
    \[q(x_1, \ldots, x_n) \eqQ x_1^2 + r(x_2, \ldots, x_n)\]
\end{lemma}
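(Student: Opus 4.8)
The plan is to prove both directions by connecting the combinatorial condition "$G(\Q^n,q)$ is nonempty" to the arithmetic condition "$q$ represents $1$ over $\Q$", and then translating the latter into a statement about the structure of the form. The backward direction is essentially free: if $q \embed q'$ fails but we are told $I_1 \embed q$, then by definition there is $r \in \mcQ_{n-1}$ with $q(x_1,\dots,x_n) \eqQ x_1^2 + r(x_2,\dots,x_n)$; evaluating the right-hand form at $(1,0,\dots,0)$ gives the value $1$, and since rational equivalence preserves the set of represented values, $q$ also represents $1$. Hence there is $\x \in \Q^n$ with $q(\x)=1$, so $(\x,\zero)$ (or rather any pair differing by such $\x$) is an edge, and $G(\Q^n,q)$ is nonempty.

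For the forward direction, suppose $G(\Q^n,q)$ is nonempty. As observed already in the proof of the earlier nonemptiness theorem, this is equivalent to the existence of $\x_0 \in \Q^n$ with $q(\x_0)=1$, i.e.\ $q$ represents $1$ over $\Q$. The key classical fact I would invoke is that if a nondegenerate quadratic form $q$ over $\Q$ of rank $n$ represents a nonzero value $a$, then $q$ is rationally equivalent to a form of the shape $a x_1^2 + r(x_2,\dots,x_n)$ for some nondegenerate rational form $r$ of rank $n-1$; moreover, if $q$ is positive definite then so is this orthogonal complement $r$, so $r \in \mcQ_{n-1}$. Applying this with $a=1$ gives exactly $q \eqQ x_1^2 + r(x_2,\dots,x_n)$, which is the statement $I_1 \embed q$.

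The only real content is that diagonalization/orthogonalization step: starting from a vector $\x_0$ with $q(\x_0)=1$, one completes $\x_0$ to a rational basis and applies Gram--Schmidt with respect to the bilinear form associated to $q$ (division is legitimate since the form is positive definite, so no self-orthogonal vectors appear and all the diagonal entries produced are positive rationals). This produces a rational change of basis $T \in GL(n,\Q)$ carrying $q$ into $x_1^2 + r(x_2,\dots,x_n)$ with $r$ positive definite of rank $n-1$, hence $r \in \mcQ_{n-1}$. Finally one checks the equivalence of the two phrasings in the statement: "$I_1 \embed q$" unwinds by definition to the existence of such an $r$ with $x_1^2 + r \eqQ q$, so the two conditions are literally the same. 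I expect this orthogonalization argument to be the main (though routine) obstacle; everything else is bookkeeping about what rational equivalence preserves.
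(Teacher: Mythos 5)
Your proposal is correct and follows essentially the same route as the paper: the backward direction reads off the edge $(1,0,\ldots,0)$ from the equivalent form $x_1^2 + r$, and the forward direction takes a rational solution of $q(\x)=1$ and changes basis so that this vector becomes the first coordinate axis. You are in fact slightly more careful than the paper, which asserts without comment that $q(T\x)$ has the shape $x_1^2 + r(x_2,\ldots,x_n)$ after merely sending $v$ to a coordinate axis; your explicit Gram--Schmidt step (legitimate since positive definiteness rules out self-orthogonal vectors) is exactly what is needed to eliminate the cross terms $x_1 x_j$.
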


\begin{proof}
If $q(x_1, \ldots, x_n) \eqQ x_1^2 + r(x_2, \ldots, x_n)$, then by lemma~\ref{isom} there holds $G(\Q^n, q) \simeq G(\Q^n, x_1^2 + r(x_2, \ldots, x_n))$ and we can find the edge $(1, 0, \ldots, 0)$ in the latter.

If graph $G(\Q^n, q)$ is nonempty, consider solution $v = (v_1, v_2, \ldots, v_n)$  of $q(\x) = 1$. We can take $T \in GL(n, \Q)$ such that $v$ is one of the coordinate axis (for example $x_1$). Now it is easy to see that $q(T\x)$ has the form $x_1^2 + r(x_2, \ldots, x_n)$.
\end{proof}

Chilakamarri \cite{chilakamarri1988unit} proved that graphs $G(\Q^2, I_2)$, $G(\Q^3, I_3)$ and $G(\Q^4, I_4)$ are disconnected. He also showed that for $n \geq 5$ graphs $G(\Q^n, I_n)$ are also connected.
The following theorem is the generalization of this statement.

\begin{theorem} \label{connect_for_geq_5}
    For any $n \geq 5$ and $q \in \mcQ_n$ graph $G(\Q^n, q)$ is connected. 
\end{theorem}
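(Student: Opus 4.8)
The plan is to reduce connectedness to two facts: (1) every vertex is connected by a path to the origin $\zero$, and (2) the set of rational points reachable from $\zero$ is closed under addition (i.e.\ forms a subgroup of $\Q^n$), hence if it contains a generating set of $\Q^n$ it is everything. Concretely, let $N \subseteq \Q^n$ be the connected component of $\zero$ in $G(\Q^n,q)$. Since $q$ is a quadratic form, the map $\x \mapsto \x + \mathbf{v}$ is a graph automorphism for every $\mathbf{v}$ that is itself connected to $\zero$ (translation preserves the relation $q(\x-\y)=1$), so $N$ is a subgroup of the additive group $\Q^n$. Therefore it suffices to show that $N$ contains arbitrarily small multiples of each standard basis direction, or more robustly, that $N$ spans $\Q^n$ over $\Q$ and contains a full-rank sublattice — the cleanest route is to show $\tfrac{1}{k}\mathbb{Z}^n \subseteq N$ for a suitable $k$, or simply that $N$ is not contained in any proper subgroup.

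First I would use Lemma \ref{Meyer} (Meyer's theorem) as the workhorse. For $n \geq 5$ and any $c \in \Q$, the form $q(\x) - c\,t^2$ in $n+1 \geq 6$ variables is indefinite of rank $\geq 5$, so it represents $0$ nontrivially over $\Q$; arguing as in the nonemptiness theorem, this shows $q$ represents every positive rational $c$. More importantly, given any two vertices we want to connect, after translating we may assume one of them is $\zero$ and the other is some $\mathbf{v} \in \Q^n$; I want a vertex $\mathbf{w}$ with $q(\mathbf{w}) = 1$ and $q(\mathbf{v} - \mathbf{w}) = 1$, which realizes a path of length $2$ from $\zero$ to $\mathbf{v}$. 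The existence of such $\mathbf{w}$ amounts to finding a rational point on the intersection of two unit "spheres" of $q$; parametrizing $\mathbf{w}$ on the affine hyperplane $\{q(\mathbf{w}) - q(\mathbf{v}-\mathbf{w}) = 0\}$ (which is an affine-linear condition once we expand, of the form $2B(\mathbf{v},\mathbf{w}) = q(\mathbf{v})$ where $B$ is the associated bilinear form) reduces the problem to: does the restriction of $q$ to this affine hyperplane represent the value $1$? That restriction is a positive definite rational form on an $(n-1)$-dimensional affine space; homogenizing costs one more variable, giving a rank-$(n+1) \geq 6$ indefinite form, and Meyer applies again — but one must check the hyperplane is genuinely $(n-1)$-dimensional (it is, unless $\mathbf{v}=\zero$) and that the homogenization is isotropic, which is automatic from the signature.

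The key steps in order: (i) observe $N = N(\zero)$ is a subgroup of $\Q^n$ via translation automorphisms; (ii) reduce "connect $\x$ to $\y$" to "connect $\zero$ to $\x - \y$", hence to showing $N = \Q^n$; (iii) for arbitrary $\mathbf{v} \in \Q^n$ with $\mathbf{v} \neq \zero$, set up the affine hyperplane $H_{\mathbf{v}} = \{\mathbf{w} : 2B(\mathbf{v},\mathbf{w}) = q(\mathbf{v})\}$ and show $q|_{H_{\mathbf{v}}}$ represents $1$ by homogenizing and invoking Lemma \ref{Meyer} (rank $n+1 \geq 6$, indefinite); (iv) conclude $\mathbf{v} \in N$, so $N = \Q^n$. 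A subtlety to handle in step (iii) is the degenerate case $\mathbf{v} = \zero$ (trivial) and ensuring the quadratic form obtained after restricting to $H_{\mathbf{v}}$ and homogenizing really has rank $\geq 5$ and mixed signature; since $q$ is positive definite, $q|_{H_{\mathbf{v}}}$ is positive definite of rank $n-1 \geq 4$, and subtracting a square of the homogenizing variable makes it indefinite of rank $n \geq 5$ — so Meyer applies to conclude it represents $0$ nontrivially, equivalently $q|_{H_{\mathbf{v}}}$ represents $1$.

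The main obstacle I anticipate is step (iii): carefully verifying that the affine slice $H_{\mathbf{v}}$ has the right dimension for \emph{every} nonzero $\mathbf{v}$ (including $\mathbf{v}$ with $q(\mathbf{v})$ small, large, or such that $\mathbf{v}$ is "close to isotropic" in some completion — though over $\Q$ with $q$ definite there is no isotropy), and that after an affine change of coordinates putting $H_{\mathbf{v}}$ in standard position, the restricted form is still a genuine rational form to which Lemma \ref{Meyer} applies with the correct rank count. Everything else — the subgroup observation and the translation-to-origin reduction — is routine. I expect the final write-up to mirror the proof of the nonemptiness theorem almost verbatim, applied to the restricted form on $H_{\mathbf{v}}$, with the path $\zero \sim \mathbf{w} \sim \mathbf{v}$ exhibited explicitly.
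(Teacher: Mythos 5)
Your steps (i)--(ii) are sound: translations are automorphisms of $G(\Q^n,q)$, so the component $N$ of $\zero$ is indeed a subgroup of $(\Q^n,+)$, and it suffices to show $N=\Q^n$. The genuine gap is in step (iii): it is simply false that every nonzero $\mathbf{v}$ is joined to $\zero$ by a path of length $2$. Since $q$ is positive definite, over $\R$ the triangle inequality for $\sqrt{q}$ forces $\sqrt{q(\mathbf{v})}\le \sqrt{q(\mathbf{w})}+\sqrt{q(\mathbf{v}-\mathbf{w})}=2$, so for $q(\mathbf{v})>4$ (e.g.\ $q=I_5$, $\mathbf{v}=(10,0,0,0,0)$) the two unit spheres do not meet even over $\R$. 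Your error is in the homogenization: the restriction of $q$ to the affine hyperplane $H_{\mathbf{v}}$ is an \emph{inhomogeneous} quadratic polynomial $q_0(\mathbf{u})+\ell(\mathbf{u})+c_0$ with minimum $q(\mathbf{v})/4$ over $\R$ (attained at $\mathbf{v}/2$), and its homogenization $q_0(\mathbf{u})+\ell(\mathbf{u})t+(c_0-1)t^2$ is \emph{not} ``positive definite form minus a square of the homogenizing variable''; when $q(\mathbf{v})>4$ it is anisotropic over $\R$, so Lemma \ref{Meyer} gives nothing. The indefiniteness you need is exactly the condition $0<q(\mathbf{v})<4$, and you cannot wave it away.

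The fix is available from your own step (i): since $N$ is a subgroup, $\mathbf{v}=k\cdot(\mathbf{v}/k)\in N$ as soon as $\mathbf{v}/k\in N$, and $q(\mathbf{v}/k)=q(\mathbf{v})/k^2<4$ for $k$ large, so it suffices to run your two-sphere argument only for short vectors (where the homogenized form really is indefinite of rank $n\ge 5$, or degenerate with an isotropic radical vector, and in either case one checks the solution has $t\ne 0$). With that reduction inserted, your argument closes and is essentially the paper's: the paper diagonalizes $q$, uses Meyer to produce a length-$2$ path from $\zero$ to $(2c,0,\ldots,0)$ for arbitrarily small rational $c>0$, and chains $k$ such detours to reach $(2kc,0,\ldots,0)$, coordinate by coordinate --- i.e.\ it performs the same ``reduce to small steps, then iterate'' manoeuvre explicitly instead of via the subgroup language.
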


\begin{proof}

Consider $T \in GL(n, \Q)$ such that $q(T\x)$ is diagonal. We will redefine $q(\x)$ as $q(T\x)$, for by lemma \ref{isom} this will not change the considered graph.

Let $q(\x) = ax_1^2 + r(x_2, \ldots, x_n)$, $a > 0$, $r \in \mcQ_{n-1}$.

Consider any rational $0 < c < \frac{1}{\sqrt{a}}$ and quadratic form $q_1(\x) = r(x_2, \ldots, x_n) - (1 - ac^2) x_1^2$. By lemma~\ref{Meyer} there exists rational solution to $q_1(\x) = 0$. Let us divide this solution by $x_1^2$ which is clearly nonzero.
 
Now let $(x_2^0, \ldots, x_n^0)$ be the obtained solution to $r(x_2, \ldots, x_n) = 1 - ac^2$.

It is easy to see that points $(0, 0, \ldots, 0)$, $(c, x_2, \ldots, x_n)$ and $(2c, 0, \ldots, 0)$ form a path of length 2 in $G(\Q^n, q)$. Using that we can find path from $(0, 0, \ldots, 0)$ to $(2kc, 0, \ldots, 0)$ for any $k \in \N$. Since we can take arbitrary small $c$ and arbitrary large $k$, we can see that there exists path from $(0, 0, \ldots, 0)$ to $(x, 0, \ldots, 0)$ for any  $x \in \Q$.  

Applying this consequently for other coordinates we can see that graph $G(\Q^n, q)$ is connected.
\end{proof}

\section{Clique numbers of $G(\Q^n, q)$}\label{clique_chapter}

This chapter is devoted to finding the clique number $\omega(G(\Q^n, q))$ of $G(\Q^n, q)$, i.e. the maximum number of  pairwise connected vertices.

We will use notions and theorems from theory of rational quadratic forms that can be found in the appendix and in the book \cite{Serre1973}.

Let us denote
$$S_n(x_1, \ldots, x_n) = \sum_{1 \leq i \leq j \leq n} x_i x_j$$


\begin{theorem}\label{omega = n + 1}
$\omega\left(\Q^n, q\right) = n + 1$ iff $S_n \eqQ q$.
\end{theorem}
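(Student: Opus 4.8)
The plan is to characterize when a clique of size $n+1$ exists in $G(\Q^n, q)$ and then recognize that this condition is equivalent to $S_n \eqQ q$. First I would observe that $n+1$ is an upper bound: any clique in $G(\Q^n, q)$ is an affinely independent set of points in $\Q^n$ (since the vertices of a clique, translated so one is at the origin, give nonzero vectors whose pairwise $q$-values are all $1$, and a standard argument using positive-definiteness of $q$ shows such vectors are linearly independent), so a clique has at most $n+1$ vertices. Thus $\omega(\Q^n, q) = n+1$ is equivalent to the existence of a clique on exactly $n+1$ vertices, i.e.\ an affine basis $p_0, p_1, \ldots, p_n$ of $\Q^n$ with $q(p_i - p_j) = 1$ for all $i \ne j$.

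Next I would translate so that $p_0 = \zero$ and set $v_i = p_i - p_0$ for $i = 1, \ldots, n$; these form a $\Q$-basis of $\Q^n$. The clique conditions become $q(v_i) = 1$ for all $i$ and $q(v_i - v_j) = 1$ for $i \ne j$. Writing $B$ for the symmetric bilinear form associated to $q$ (so $q(\x) = B(\x, \x)$ and $B$ has rational entries), the condition $q(v_i - v_j) = q(v_i) - 2B(v_i, v_j) + q(v_j) = 1$ together with $q(v_i) = q(v_j) = 1$ forces $B(v_i, v_j) = \frac{1}{2}$ for all $i \ne j$, while $B(v_i, v_i) = 1$. Hence the Gram matrix of $q$ in the basis $v_1, \ldots, v_n$ is exactly $\frac{1}{2}(I_n + J_n)$, where $J_n$ is the all-ones matrix. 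Now I would check that this is precisely the Gram matrix of $S_n$ in the standard basis: indeed $S_n(\x) = \sum_{i \le j} x_i x_j = \sum_i x_i^2 + \sum_{i<j} x_i x_j$, whose associated bilinear form has diagonal entries $1$ and off-diagonal entries $\frac{1}{2}$, i.e.\ Gram matrix $\frac{1}{2}(I_n + J_n)$. (One should also note this form is positive definite — its eigenvalues are $\frac{1}{2}$ with multiplicity $n-1$ and $\frac{n+1}{2}$ once — so $S_n \in \mcQ_n$, which is needed for the statement to make sense.)

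The two directions then follow by changing basis. If $S_n \eqQ q$, there is $T \in GL(n, \Q)$ with $q(T\x) = S_n(\x)$; then $v_i := T e_i$ satisfy $q(v_i) = S_n(e_i) = 1$ and $q(v_i - v_j) = S_n(e_i - e_j) = 1$, and together with $\zero$ these $n$ vectors give a clique of size $n+1$, so $\omega(\Q^n, q) = n+1$. Conversely, if $\omega(\Q^n, q) = n+1$, the argument above produces a $\Q$-basis $v_1, \ldots, v_n$ in which $q$ has Gram matrix $\frac{1}{2}(I_n + J_n)$; letting $T$ be the matrix whose columns are the $v_i$, we get $q(T\x) = S_n(\x)$, hence $S_n \eqQ q$.

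The main obstacle I anticipate is the clean bookkeeping of the clique-versus-affine-independence step: one must argue carefully that a clique really has at most $n+1$ vertices (equivalently that translated clique vectors are linearly independent over $\Q$), which uses positive-definiteness, and then keep straight the passage between "points at pairwise $q$-distance $1$," "Gram matrix equal to $\frac12(I_n+J_n)$," and "rational equivalence to $S_n$." The algebra is routine once the right basis is chosen; the care is all in the setup and in not conflating affine and linear conditions.
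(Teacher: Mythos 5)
Your proof is correct and follows essentially the same route as the paper's: translate the clique so one vertex is at the origin, use the remaining $n$ vertices as a basis, and observe that the Gram matrix of $q$ in that basis is forced to be $\tfrac{1}{2}(I_n+J_n)$, i.e.\ the matrix of $S_n$. You are in fact a bit more careful than the paper, which silently assumes both that an $(n+1)$-clique spans $\Q^n$ affinely (so that the change of basis $T$ exists) and that $n+1$ is an upper bound on the clique size; your positive-definiteness argument supplies both.
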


\begin{proof}
    If $q \eqQ S_n$ then $G(\Q^n, q) \simeq G(\Q^n, S_n)$. It is easy to check that in $G(\Q^n, S_n)$ points 
        \[ \e_0 = (0, 0, 0, \ldots, 0) \]
	\[ \e_1 = (1, 0, 0, \ldots, 0) \]
	\[ \e_2 = (0, 1, 0, \ldots, 0) \]
	\[ \ldots \]
	\[ \e_n = (0, 0, 0, \ldots, 1) \]
    
    form a clique of size $(n + 1)$.
    
    To the contrary, let $\omega\left(\Q^n, q\right) = n + 1$ and $v_0 = (0, 0, \ldots, 0), v_1, \ldots, v_n$ is $(n+1)$-clique in $G(\Q^n, q)$. Consider $T \in GL(n, \Q)$ such that $Tv_i = e_i$ for any $(1 \leq i \leq n)$. Let $\displaystyle r(\x) = q(T\x) = \sum_{i \leq j} r_{ij} x_i x_j$.
    Since $r(v_i - v_0) = 1$ we get $r_{ii} = 1$.
    Since $r(v_i - v_j) = 1$ we get   $r_{ii} + r_{jj} - r_{ij} = 1$ hence $r_{ij} = 1$. 
    So we obtain $r = S_n$.
\end{proof}

\begin{theorem}
\[
    \omega(G(\Q^n, q))  = \max_{k \colon S_k \embed q} k + 1
\]
\end{theorem}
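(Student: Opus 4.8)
The plan is to prove the identity by establishing two inequalities. For the lower bound $\omega(G(\Q^n, q)) \geq k + 1$ whenever $S_k \embed q$, I would argue as follows. If $S_k \embed q$, then by definition there is $r \in \mcQ_{n-k}$ with $q \eqQ S_k(x_1, \ldots, x_k) + r(x_{k+1}, \ldots, x_n)$, so by Lemma~\ref{isom} the graph $G(\Q^n, q)$ is isomorphic to $G(\Q^n, S_k + r)$. Inside the latter, take the $(k+1)$ points $\e_0, \e_1, \ldots, \e_k$ supported on the first $k$ coordinates (with the remaining coordinates zero); exactly as in the proof of Theorem~\ref{omega = n + 1}, the identity $S_k(\e_i - \e_j) = 1$ for $i \neq j$ shows these form a clique of size $k+1$. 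Taking the maximum over all valid $k$ gives $\omega(G(\Q^n, q)) \geq \max_{k : S_k \embed q} (k+1)$.

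For the upper bound, suppose $G(\Q^n, q)$ contains a clique of size $m+1$; I must show $S_m \embed q$, which will force $m \leq \max\{k : S_k \embed q\}$ and hence $\omega(G(\Q^n, q)) \leq \max_{k : S_k \embed q}(k+1)$. Let $v_0, v_1, \ldots, v_m$ be the clique; translating, we may assume $v_0 = \zero$. The vectors $v_1 - v_0, \ldots, v_m - v_0$ span a rational subspace of $\Q^n$; crucially they are linearly independent, because if $\sum \lambda_i (v_i - v_0) = 0$ nontrivially then $q$ would vanish on a nonzero vector, contradicting positive-definiteness (this is the place where definiteness is essential and where one must be slightly careful). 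So $m \leq n$. Extend $v_1, \ldots, v_m$ to a basis of $\Q^n$ and pick $T \in GL(n, \Q)$ carrying $v_i$ to the standard basis vector $\e_i$ for $1 \leq i \leq m$. Writing $r(\x) = q(T^{-1}\x) = \sum_{i \leq j} r_{ij} x_i x_j$, the clique conditions $r(\e_i) = 1$ and $r(\e_i - \e_j) = 1$ for $1 \leq i, j \leq m$ give, exactly as in Theorem~\ref{omega = n+1}, $r_{ii} = 1$ and $r_{ij} = 1$ for all $1 \leq i \leq j \leq m$. Hence $r$ restricted to the first $m$ coordinates is precisely $S_m$, and $r$ decomposes as $r(\x) = S_m(x_1, \ldots, x_m) + (\text{terms involving } x_{m+1}, \ldots, x_n)$ — but this is not yet an orthogonal splitting, since cross terms $r_{ij} x_i x_j$ with $i \leq m < j$ may appear.

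The main obstacle is therefore the final step: passing from "the top-left $m \times m$ block of $r$ equals $S_m$" to the genuine direct-sum statement $S_m \embed q$, i.e. $S_m(x_1,\ldots,x_m) + r'(x_{m+1},\ldots,x_n) \eqQ q$ for some $r' \in \mcQ_{n-m}$. The clean way to handle this is to invoke the standard theory of quadratic forms: since $S_m$ is a nondegenerate rational quadratic form sitting as a subform of $r$ on a coordinate subspace, one can perform a rational change of variables on $x_{m+1}, \ldots, x_n$ (completing the square / orthogonalizing the complement relative to the first $m$ coordinates, using that the bilinear form associated to $S_m$ is nondegenerate) to eliminate all cross terms, leaving $S_m \perp r'$ with $r'$ automatically positive definite because $r$ is. This orthogonal-complement argument is exactly the content of the ``a nondegenerate subform splits off'' fact from the appendix, and with it in hand we conclude $S_m \embed q$, completing the upper bound and hence the theorem.
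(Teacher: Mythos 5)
Your proof follows the same route as the paper's: the lower bound by embedding the clique $\e_0,\ldots,\e_k$ of $G(\Q^k,S_k)$ via Lemma~\ref{embed}, and the upper bound by restricting $q$ to the span of a maximum clique and reducing to Theorem~\ref{omega = n + 1}. The ``splitting off'' step you single out as the main obstacle is legitimate and standard (complete the square against the nondegenerate block $S_m$, or equivalently take the orthogonal complement of the clique's span with respect to the bilinear form of $q$); the paper glosses over it just as quickly as you do, though it is not literally ``a fact from the appendix'' as you claim.

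The one step that does not hold up as written is your justification of the linear independence of $v_1-v_0,\ldots,v_m-v_0$. If $\sum_i \lambda_i(v_i-v_0)=0$ nontrivially, then the vector on which $q$ ``vanishes'' is the zero vector, so no contradiction with positive definiteness arises; a linearly dependent point configuration is perfectly compatible with $q$ being positive definite, and your stated reason is a non-sequitur. The correct argument must use the clique relations themselves: the Gram matrix of $v_1-v_0,\ldots,v_m-v_0$ with respect to the bilinear form $(\x,\y)_q=\bigl(q(\x)+q(\y)-q(\x-\y)\bigr)/2$ has all diagonal entries equal to $1$ and all off-diagonal entries equal to $\tfrac12$, hence determinant $\tfrac{m+1}{2^m}\ne 0$; since a linearly dependent family has singular Gram matrix under any inner product, the vectors are independent (and in particular $m\le n$). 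The paper's own proof also silently assumes the clique spans a $k$-dimensional affine subspace, so this is a gap you inherited rather than introduced --- but the patch you offer for it is not valid and should be replaced by the Gram-matrix computation.
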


\begin{proof}
    If $S_k \embed q$ then $G(\Q^k, S_k) \subseteq G(\Q^n, q)$ which means
    \[
    \omega(G(\Q^n, q)) \geq \omega(G(\Q^k, S_k)) = k + 1.
    \]

    Assume $\omega(G(\Q^n, q)) \geq k + 1$. Then consider affine subspace $V$ of dimension $k$ that contains $(k+1)$-clique. Then by theorem~\ref{omega = n + 1} in this subspace quadratic form $q$ is equivalent $S_k$, hence $S_k \embed q$.
\end{proof}

\begin{theorem}
    For any $q \in \mcQ_n$ there holds $\omega(G(\Q^n, q)) \geq n - 2$.
\end{theorem}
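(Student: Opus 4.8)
The plan is to show that $S_{n-3} \embed q$, which by the preceding theorem gives $\omega(G(\Q^n,q)) \ge (n-3)+1 = n-2$. If $n \le 3$ there is nothing to prove, since a graph on a nonempty vertex set has clique number at least $1 \ge n-2$; so assume $n \ge 4$. Let $b(\x,\y) = \tfrac12\bigl(q(\x+\y)-q(\x)-q(\y)\bigr)$ be the polar form of $q$, so that $q(\x-\y) = q(\x)+q(\y)-2b(\x,\y)$. Exactly as in the proof of Theorem~\ref{omega = n + 1}, it suffices to find vectors $w_1,\dots,w_{n-3} \in \Q^n$ with $q(w_i)=1$ for all $i$ and $b(w_i,w_j)=\tfrac12$ for all $i\ne j$: their Gram matrix with respect to $b$ is $\tfrac12(I+J)$, which is positive definite and hence nonsingular, so the $w_i$ are linearly independent and $q$ restricted to their span is a form equivalent to $S_{n-3}$ (equivalently, $\{\zero,w_1,\dots,w_{n-3}\}$ is an $(n-2)$-clique).

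I would construct $w_1,\dots,w_{n-3}$ one at a time, keeping at each stage that $w_1,\dots,w_m$ have Gram matrix $\tfrac12(I+J)$ of size $m$. Suppose such $w_1,\dots,w_m$ are in hand with $0 \le m \le n-4$, and let $W = \langle w_1,\dots,w_m\rangle$. Since $q|_W$ is positive definite (being equivalent to $S_m$), $W$ is nondegenerate, so $\Q^n = W \perp W^{\perp}$ with $q|_{W^{\perp}}$ positive definite of rank $n-m$. Put $u = \tfrac{1}{m+1}\sum_{j=1}^{m} w_j \in W$. A short computation with the Gram matrix gives $b(u,w_i) = \tfrac12$ for every $i \le m$ and $q(u) = b(u,u) = \tfrac{m}{2(m+1)}$. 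Hence, if I can find $v \in W^{\perp}$ with $q(v) = \tfrac{m+2}{2(m+1)}$, then $w_{m+1} := u+v$ satisfies $q(w_{m+1}) = q(u)+q(v) = 1$ (the cross term $b(u,v)$ vanishes) and $b(w_{m+1},w_i) = b(u,w_i) = \tfrac12$ for $i \le m$, so $w_1,\dots,w_{m+1}$ again have the required Gram matrix.

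Everything therefore comes down to showing that the positive definite form $q|_{W^{\perp}}$ of rank $n-m$ represents the positive rational $c := \tfrac{m+2}{2(m+1)}$. For this I would reuse the device from the nonemptiness and connectedness proofs: the form $q|_{W^{\perp}}(\x) - c\,t^2$ is nondegenerate, indefinite, and of rank $(n-m)+1$, so as soon as $(n-m)+1 \ge 5$ — that is, $m \le n-4$ — Lemma~\ref{Meyer} produces a nontrivial rational zero $(\x,t)$. Here $t \ne 0$ (otherwise $q|_{W^{\perp}}(\x)=0$ and positive definiteness forces $\x=\zero$), and then $v := \x/t \in W^{\perp}$ has $q(v)=c$. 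The inequality $m \le n-4$ is exactly the condition needed to pass from $m$ vectors to $m+1$, so starting from the empty tuple $m=0$ I can iterate the step for $m = 0,1,\dots,n-4$ and arrive at $w_1,\dots,w_{n-3}$, which yields the desired $(n-2)$-clique.

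The computations are routine, and the single point that deserves attention is the bookkeeping of ranks: one must check that each intermediate $W$ stays nondegenerate (automatic because $q|_W$ is positive definite), that the splitting $\Q^n = W \perp W^{\perp}$ keeps $q|_{W^{\perp}}$ positive definite of rank exactly $n-m$, and that the bound $(n-m)+1\ge 5$ is tight at $m = n-4$, so that the recursion stops precisely at $m = n-3$. (One could instead obtain $S_{n-3}\embed q$ non-constructively, prescribing the local invariants of a rank-$3$ orthogonal complement and invoking the Hasse--Minkowski classification of rational quadratic forms, but the inductive argument above is shorter and uses only tools already present in this section.)
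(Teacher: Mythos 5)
Your proof is correct, but it takes a genuinely different route from the paper. The paper's argument is a two-line reduction: it cites Lemma~\ref{diff>=3} from the appendix (any form of rank $k$ embeds into any form of rank $\geq k+3$), applies it to $S_{n-3} \embed q$, and invokes the clique-number theorem. That lemma in turn rests on the Hasse--Minkowski classification of rational forms by their invariants $D$ and $E_\nu$ (Lemma~\ref{forms embed} together with Serre's existence proposition), so the paper's proof is short only because the invariant-theoretic machinery has been set up elsewhere. You instead build the $(n-2)$-clique vector by vector: at each stage you split $\Q^n = W \perp W^{\perp}$, compute that the new vertex must be $u+v$ with $u$ the barycentric combination in $W$ and $v \in W^{\perp}$ of prescribed length $c = \tfrac{m+2}{2(m+1)}$, and produce $v$ by applying Meyer's theorem (Lemma~\ref{Meyer}) to the indefinite form $q|_{W^{\perp}}(\x) - c\,t^2$ of rank $n-m+1 \geq 5$. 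Your bookkeeping checks out: the Gram-matrix computations ($b(u,w_i)=\tfrac12$, $q(u)=\tfrac{m}{2(m+1)}$) are right, the rank condition $m \leq n-4$ is exactly what Meyer needs and is exactly what lets the recursion terminate at $n-3$ vectors, and the degenerate cases $n \leq 3$ are handled. What each approach buys: the paper's is shorter and yields the stronger, reusable fact that \emph{every} rank-$(n-3)$ form embeds in $q$; yours is constructive, self-contained within Section~\ref{clique_chapter}, and uses only Meyer's theorem, which the paper already relies on for nonemptiness and connectedness --- so it avoids the Hilbert-symbol calculus of the appendix entirely. Both are valid proofs of the stated bound.
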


\begin{proof}
    By lemma \ref{diff>=3} from appendix $S_{n - 3} \embed q$. Using previous theorem we get $$\omega(G(\Q^n, q)) \geq \omega(G(\Q^{n - 3}, S_{n - 3})) = n - 2.$$
\end{proof}

\section{Graph isomorphisms}\label{isom_chapter}

As showed in lemma \ref{isom}, if $q_1 \eqQ q_2$ then $G(\Q^n, q_1) \simeq G(\Q^n, q_2)$.

\begin{definition}
    By $G_0(\Q^n, q)$ we denote a connected component of $G(\Q^n, q)$ that contains point $\zero$.
\end{definition}




\begin{conjecture}\label{isomorphism}
    If $G_0(\Q^n, q_1) \simeq G_0(\Q^n, q_2)$ then $q_1 \eqQ q_2$. Moreover any isomorphism between these graphs is a linear transformation $T$ such that $q_1(\x) = q_2(T\x)$.
\end{conjecture}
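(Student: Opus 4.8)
The plan is to normalise the isomorphism, then to prove that it is affine and hence linear, and finally to upgrade the equality of the two unit sets to equality of the forms themselves.

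First I would reduce to the case $\varphi(\zero)=\zero$. Translations $\x\mapsto\x+\x_0$ are automorphisms of $G(\Q^n,q)$ for every $q\in\mcQ_n$, since the edge relation depends only on $\x-\y$; so, given an isomorphism $\varphi\colon G_0(\Q^n,q_1)\to G_0(\Q^n,q_2)$, after composing with the translation carrying $\varphi(\zero)$ back to $\zero$ we may assume $\varphi$ fixes the origin. For $n\ge5$ Theorem~\ref{connect_for_geq_5} gives $G_0(\Q^n,q_i)=G(\Q^n,q_i)$, so in that range $\varphi$ is a graph isomorphism of the full distance graphs; the small cases $n\le4$, where the graphs may be disconnected or even edgeless, have to be handled separately inside $G_0$ — and one should note that if $G(\Q^n,q_i)$ happens to be edgeless the conjecture needs the tacit hypothesis that the graphs are non-empty, i.e.\ $I_1\embed q_i$ by Lemma~\ref{I_1 in q}, since otherwise $G_0$ is a single isolated vertex and the statement fails.

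The core step is to show that $\varphi$ is affine, and I expect this to be the real obstacle. It suffices to show that $\varphi$ preserves midpoints, i.e.\ $\varphi\bigl(\tfrac12(\x+\y)\bigr)=\tfrac12\bigl(\varphi(\x)+\varphi(\y)\bigr)$ for all $\x,\y$: a bijection with this property that also fixes $\zero$ is automatically additive, hence $\Q$-linear, so $\varphi$ is a map $T\in GL(n,\Q)$. (Alternatively one may reconstruct the collinearity relation and invoke the fundamental theorem of affine geometry, the field-automorphism ambiguity being trivial over $\Q$.) To extract this metric/affine information from the bare graph I would adapt the Beckman--Quarles technique to the rational, purely combinatorial setting: build finite \emph{rigid} induced subgraphs $F\subseteq G(\Q^n,q)$ — configurations so constrained that every embedding of $F$ into a graph $G(\Q^n,q')$ is the restriction of an affine map — and use such gadgets to certify, one value at a time, that $\varphi$ preserves $q$-distance $4$ (the ``distance $2$'' relation), and then, by combining and iterating gadgets, that it preserves $q(\x-\y)=t$ for a set of $t$ rich enough to pin down midpoints. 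A convenient scaffold is supplied by the maximal cliques: by the proof of Theorem~\ref{omega = n + 1} a maximal clique of $G(\Q^n,q)$ is a ``regular $q$-simplex'' whose vertices are affinely independent and therefore affinely span a flat of dimension one less than the clique size, and $\varphi$ must carry maximal cliques to maximal cliques; one can then try to recover affine combinations from the incidence pattern between a vertex and the vertices of such a simplex.

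Once $\varphi=T$ is linear the rest is routine. Applying the same reasoning to $\varphi^{-1}$, both $T$ and $T^{-1}$ send edges to edges, so $T$ restricts to a bijection of $\{\x\in\Q^n:q_1(\x)=1\}$ onto $\{\x\in\Q^n:q_2(\x)=1\}$; equivalently the homogeneous degree-two polynomial $q_2(T\x)-q_1(\x)$ vanishes on $\{\x\in\Q^n:q_1(\x)=1\}$. For $n\ge4$ this set is non-empty, and stereographic projection of the real quadric $\{q_1=1\}$ from one of its rational points shows that the rational points are Zariski dense in it; hence $q_2(T\x)-q_1(\x)$ lies in the ideal generated by $q_1(\x)-1$, and being homogeneous of degree $2$ it must be the zero polynomial. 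Thus $q_1(\x)=q_2(T\x)$ identically, which is precisely $q_1\eqQ q_2$ realised by the linear map $T$, as claimed. The principal difficulty is the middle step: unlike the Euclidean Beckman--Quarles theorem there is no topology or limiting argument available on $\Q^n$, so the rigidity of every auxiliary configuration must be forced by the arithmetic of $q_1$ and $q_2$; yet which small cliques occur, which values $t$ are represented, and which rigid gadgets exist all depend on the forms through their local invariants (determinant class, Hasse--Minkowski symbols), and making the construction uniform over all pairs of positive definite rational forms, rather than the classical case $q_1=q_2=I_n$, is exactly what keeps the statement at the level of a conjecture.
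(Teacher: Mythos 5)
The statement you are proving is stated in the paper as a \emph{conjecture}, and the paper does not prove it in general either: it only establishes a conditional reduction (Theorem~\ref{conj2 -> conj1}: Conjecture~\ref{rational simplex} implies Conjecture~\ref{isomorphism}) and settles the special cases $n=2$ and $q_1=q_2=I_n$. Your proposal is likewise not a proof, and you say so yourself: the entire argument hinges on the middle step --- constructing finite rigid induced subgraphs that force the isomorphism $\varphi$ to preserve midpoints and hence be affine --- and that step is only described as a plan, not carried out. That is a genuine gap, not a technicality: over $\Q$ there is no limiting/topological argument, which values $q(\x-\y)=t$ are represented and which cliques exist depend on the local invariants of the two forms, and no uniform gadget construction is exhibited. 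Until such gadgets are produced, nothing after ``Once $\varphi=T$ is linear'' is reached. (Two smaller points: the map $\varphi$ is only given on the component $G_0$, so even granting affineness you obtain $T$ only on the subgroup generated by the unit vectors of $q_1$ and must argue it extends to $GL(n,\Q)$; and the Zariski-density step needs the quadric $\{q_1=1\}$ to have a rational point, i.e.\ non-emptiness of the graph, which you do flag.)

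It is worth contrasting your endgame with the paper's. You propose: linearity first, then deduce $q_1(\x)=q_2(T\x)$ from the fact that $T$ identifies the two unit quadrics, via Zariski density. The paper instead extracts the form equivalence \emph{without} first proving linearity: the combinatorial invariant $p_G(a,b,k)=1\Longleftrightarrow q(a-b)=k^2$ (exactly one path of length $k$ detects squared distance $k^2$) is preserved by any graph isomorphism, so if $(\Q^n,q_1)$ contains an $(n+1)$-point simplex with integer pairwise $q_1$-lengths (Conjecture~\ref{rational simplex}), its image has the same pairwise $q_2$-lengths, and Lemma~\ref{M eqQ q} identifies both forms with the same Gram matrix, giving $q_1\eqQ q_2$. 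That route trades your affine-rigidity obstacle for the (still open) arithmetic existence of rational simplices, and it leaves the ``moreover every isomorphism is linear'' clause untouched --- which is the same clause your sketch cannot reach.
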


This conjecture indeed generalises rational Beckman--Quarles theorem since if we take $q_1 = q_2 = I_n$ it will state that any automorphism of $G_0(\Q^n, I_n)$ is a linear transformation that preserves quadratic form $I_n$. i.e. isometry.

\begin{lemma}\label{M eqQ q}
    Let $v_1, \ldots, v_n$ be linearly independent vectors in $\Q^n$. Then quadratic form  $m$ with matrix
    \[
    M = \Big\{(v_i, v_j)_q\Big\}_{i, j = 1}^{n} :=  \Big\{\frac{q(v_i) + q(v_j) - q(v_i - v_j)}{2}\Big\}_{i, j = 1}^{n}
    \]
    is rationally equivalent to $q$.
\end{lemma}

\begin{proof}
Consider Hilbert space $\Q^n$ with inner product $\langle \x, \y \rangle = (q(\x) + q(\y) - q(\x - \y)) / 2$.
Let $\e_1, \ldots, \e_n$ be a standard basis of $\Q^n$.

It is easy to see that $M$ is a Gram matrix of vectors $v_1, \ldots, v_n$ and the matrix $Q$ of quadratic form $q$ is a Gram matrix of vectors $e_1, \ldots, e_n$. Since $T$ transforms $e_1, \ldots, e_n$ into $v_1, \ldots, v_n$, we can see that $M = T^{T}QT$ which is equivalent to $m(\x) = q(T\x)$. Hence $m \eqQ q$.
\end{proof}

\begin{definition}
    Let $p_G(a, b, k)$ be a number of paths of length $k$ from $a$ to $b$ .
\end{definition}

\begin{lemma}
Let $G = G_0(\Q^n, q)$. Then $p_G(a, b, k) = 1 \Longleftrightarrow q(a - b) = k^2$.
\end{lemma}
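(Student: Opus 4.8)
The plan rests on the fact that $\|\x\|_q:=\sqrt{q(\x)}$ is a norm on $\R^n$, induced by the inner product $\langle\x,\y\rangle_q:=\tfrac12\bigl(q(\x)+q(\y)-q(\x-\y)\bigr)$, so it satisfies the triangle inequality together with its equality case. A path of length $k$ from $a$ to $b$ --- that is, a walk using $k$ edges --- is a sequence $a=v_0,v_1,\dots,v_k=b$ of vertices of $G$ with $q(v_{j+1}-v_j)=1$ for every $j$; writing $u_j:=v_{j+1}-v_j$, one always has
\[
\|a-b\|_q=\Bigl\|\sum_{j=0}^{k-1}u_j\Bigr\|_q\le\sum_{j=0}^{k-1}\|u_j\|_q=k ,
\]
so $q(a-b)\le k^2$ whenever there is a path of length $k$ from $a$ to $b$. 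Moreover, by the equality case of the Cauchy--Schwarz inequality applied inductively, equality $\|a-b\|_q=k$ holds only when the $u_j$ are all nonnegative scalar multiples of one and the same vector; since $q(u_j)=1$ for every $j$, this forces $u_0=\dots=u_{k-1}=w$ for a single $w$ with $q(w)=1$, i.e.\ the path is $a,\ a+w,\ \dots,\ a+kw$.

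For the direction $(\Leftarrow)$, assume $q(a-b)=k^2$ with $k\ge1$ and put $w:=(b-a)/k\in\Q^n$; then $q(w)=1$, so $a,\ a+w,\ \dots,\ a+kw=b$ is a path of length $k$, all of whose vertices lie in $G=G_0(\Q^n,q)$ because they are successively adjacent to $a$. By the previous paragraph it is the only such path, so $p_G(a,b,k)=1$. For $(\Rightarrow)$, suppose $p_G(a,b,k)=1$ with unique path $v_0,\dots,v_k$; since a path exists, $q(a-b)\le k^2$, and it remains to exclude $q(a-b)<k^2$. Assume it; then the $u_j$ are not all equal, so we may fix an index $i$ with $u:=u_i\ne u_{i+1}=:u'$ (and $q(u)=q(u')=1$). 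It suffices to exhibit $x\in\Q^n$ with $x\ne v_{i+1}$ and $q(x-v_i)=q(x-v_{i+2})=1$: then $v_0,\dots,v_i,x,v_{i+2},\dots,v_k$ (whose every vertex again lies in $G$, being adjacent to one of the $v_j$) is a second path of length $k$ from $a$ to $b$, contradicting $p_G(a,b,k)=1$.

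To obtain $x$, seek it as $x=v_{i+1}+y$ with $y$ in the rational subspace $W:=\{\,y:\langle y,u+u'\rangle_q=0\,\}$. Since $v_{i+2}-v_i=u+u'$, the identity $q(x-v_i)=q(x-v_{i+2})$ then holds automatically, while $q(x-v_i)=q(u+y)=1+2\langle u,y\rangle_q+q(y)$, so only $q(y)+2\langle u,y\rangle_q=0$ is required. The linear functional $\langle u,\cdot\rangle_q$ is not identically zero on $W$: otherwise $u\in W^{\perp_q}=\operatorname{span}(u+u')$ (with the convention $\operatorname{span}(0):=\{0\}$), so $u=c(u+u')$ for some $c\in\Q$; but $u\ne0$ forces $c\ne0$, whence $u'=\tfrac{1-c}{c}\,u$, and comparing $q$-values gives $u'=\pm u$, which is impossible --- $u'=u$ contradicts the choice of $i$, and $u'=-u$ yields $u+u'=0$ and hence $u=0$. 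Choosing $y_0\in W$ with $\langle u,y_0\rangle_q\ne0$, the equation $q(y)+2\langle u,y\rangle_q=0$ has, apart from $y=0$, the solution $y=ty_0$ with $t=-2\langle u,y_0\rangle_q/q(y_0)\in\Q\setminus\{0\}$ (here $q(y_0)\ne0$ since $q$ is positive definite and $y_0\ne0$), and $x:=v_{i+1}+ty_0\ne v_{i+1}$ is the required point.

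The step I expect to be the main obstacle is precisely this last one: manufacturing a genuinely different path when $q(a-b)<k^2$. In geometric terms it asserts that the two unit spheres centred at $v_i$ and $v_{i+2}$, which already meet at the rational point $v_{i+1}$, must meet at a second rational point, and the rational line through $v_{i+1}$ lying in $W$ produces one. Two boundary remarks should also be recorded: $a$ and $b$ are to be understood as vertices of $G=G_0(\Q^n,q)$, and the case $k=0$ is the trivial equivalence $a=b\iff q(a-b)=0$.
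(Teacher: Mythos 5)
Your proof is correct, and it even fills in steps the paper leaves implicit, but the mechanism you use at the crucial point differs from the paper's. Both arguments share the same skeleton: $p_G(a,b,k)=1$ holds exactly when every length-$k$ walk from $a$ to $b$ has all its edge vectors equal, which forces $q(a-b)=k^2$; conversely $q(a-b)=k^2$ gives the straight walk with step $(b-a)/k$. The difference lies in how a \emph{second} walk is produced when two consecutive edge vectors $u_i\ne u_{i+1}$ exist. The paper simply transposes them: the walk using edges $y_1,\dots,y_{i-1},y_{i+1},y_i,y_{i+2},\dots,y_k$ has the same endpoints but a different intermediate vertex, so it is a different walk --- a one-line combinatorial observation needing no geometry. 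You instead manufacture a second rational point on the intersection of the unit spheres around $v_i$ and $v_{i+2}$ by solving a quadratic on the hyperplane orthogonal to $u+u'$; this is correct (your non-vanishing argument for $\langle u,\cdot\rangle_q$ on $W$, including the degenerate case $u'=-u$, checks out), but it is considerably heavier, and the paper's swapped vertex $v_i+u_{i+1}$ is in fact one of the points your construction can produce. What your write-up buys in exchange is that it makes explicit both the inequality $q(a-b)\le k^2$ and the uniqueness of the straight walk in the $(\Leftarrow)$ direction via the equality case of the triangle inequality --- points the paper's proof does not spell out --- and it records the harmless $k=0$ boundary case.
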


\begin{proof}
Let $a = x_1, x_2, \ldots, x_{k+1} = b$ be vertices of the path of length $k$ from $a$ to $b$. For $1 \leq i \leq k$ consider $y_i = x_{i + 1} - x_i$ to be  the edges of this path. Then for any $i$ there holds $q(y_i) = 1$ and $y_1 + \ldots + y_k = b - a$. If not all edges are equal then there is $y_i \neq y_{i + 1}$. Then between $a$ and $b$ there will be another path that consists of edges
$$y_1, y_2, \ldots, y_{i - 1}, y_{i + 1}, y_{i}, y_{i + 2}, \ldots, y_n.$$ 

If $y_1 = y_2 = \ldots = y_n$, then $q(a - b) = q(k \cdot y_1) = k^2$. 
\end{proof}


\begin{conjecture}\label{rational simplex}
    For any $q \in \mcQ_n$ if graph $G_0(\Q^n, q)$ 
 is nonempty then there exists $(n+1)$-simplex with rational lengths in $(\Q^n, q)$.
\end{conjecture}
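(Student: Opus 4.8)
Since $G_0(\Q^n,q)$ is nonempty precisely when $q$ represents $1$ over $\Q$, the goal is: from that hypothesis, produce affinely independent $p_0,\dots,p_n\in\Q^n$ with every $q(p_i-p_j)$ a rational square (by Lemma~\ref{M eqQ q} the Gram matrix of such a configuration is then automatically rationally equivalent to $q$, though we do not need this). I would build the vertices one at a time, starting from the single point $p_0=\zero$, using an \emph{extension lemma}: if $p_0,\dots,p_k$ ($0\le k<n$) are affinely independent with all pairwise $q$-distances rational squares and, in addition, their circumradius$^2$ — the common value $q(c-p_i)$ at the circumcenter $c$, which lies in the affine span and is rational since it solves a rational linear system — is the square of a rational (the case $k=0$ being vacuous), then the configuration extends one dimension up with the same properties \emph{provided} $q|_{W^\perp}$ represents a nonzero rational square. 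Here $W=\operatorname{span}\{p_i-p_0\}$, $W^\perp$ is its orthogonal complement for the bilinear form of $q$, and $q=q|_W\perp q|_{W^\perp}$ with $q|_{W^\perp}\in\mcQ_{n-k}$; the proviso is equivalent to $q|_{W^\perp}$ representing $1$, hence every rational square, because scaling a form by a rational square yields an equivalent form.

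To prove the lemma, write the circumradius$^2$ as $r^2$, pick a rational $t\ne0$ with $r^2+t^2$ a rational square (a Pythagorean choice; infinitely many exist), pick $b\in W^\perp\setminus\{\zero\}$ with $q|_{W^\perp}(b)=t^2$, and set $p_{k+1}=c+b$: orthogonality gives $q(p_{k+1}-p_i)=q(c-p_i)+q(b)=r^2+t^2$ for all $i\le k$, the enlarged configuration is affinely independent since $b\notin W$, and a short computation shows its circumradius$^2$ equals $\big((r^2+t^2)/(2t)\big)^2$, again a rational square. The first step ($k=0$) is free, since there $q|_{W^\perp}=q$, which represents $1$; a later step is free whenever $\operatorname{rank}q|_{W^\perp}=n-k\ge4$, because then $q|_{W^\perp}(\x)-w^2$ is indefinite of rank $\ge5$ and Meyer's Lemma~\ref{Meyer} gives it a nontrivial rational zero, which forces $q|_{W^\perp}$ to represent $1$ (positive definiteness excludes $w=0$); and a step is free as well when $n-k=1$: writing $q|_{W^\perp}(\x)=\delta x^2$, one needs $\delta\beta^2+r^2$ to be a rational square for some rational $\beta\ne0$, and the conic $\delta X^2+r^2Y^2=Z^2$ carries the rational point $(0,1,r)$, hence rational points with $X,Y\ne0$. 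Hence the construction runs on its own until the residual form has rank $3$, producing for every $q$ representing $1$ an $(n-3)$-simplex with rational lengths and square circumradius$^2$, and it runs on its own again once the residual rank reaches $1$.

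The obstacle I expect is the handful of steps where $q|_{W^\perp}$ has rank $3$ or $2$: Meyer's theorem no longer applies, and such a positive definite form may represent no nonzero square ($2x^2+3y^2+3z^2$ being an example). The orthogonal ``circumcenter-plus-bump'' device can then fail outright — when $n=3$, its success at the single nontrivial step would force $q\eqQ u^2+v^2+dw^2$, which is false for $x^2+2y^2+3z^2$ by a Hasse-invariant computation, so that step has no orthogonal solution. Closing the gap amounts, in essence, to settling the conjecture for ambient dimensions $3$ and $4$ by placing the last one or two vertices \emph{off} the orthogonal complement (mixing coordinates instead of bumping orthogonally); I would attack this through a Hasse--Minkowski analysis of the resulting ternary and quaternary conics, whose local obstructions are confined to finitely many primes, together with Euler-brick-type constructions of rationals whose pairwise sums of squares are again squares. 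The full number-theoretic weight of the conjecture sits in these final vertices.
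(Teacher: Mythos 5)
Be aware first that the statement you are proving is left \emph{open} in the paper: it is stated as Conjecture~\ref{rational simplex}, and the paper establishes it only in two special cases --- for $n=2$ via the explicit triangle $(0,0),(n-1,2),(-n+1,2)$ in $(\Q^2,x^2+ny^2)$, and for $q=I_n$ via explicit simplices built from pairs $(\ldots,1,\pm1,\ldots)$ together with the points involving $1/15$ and $7/4$. So there is no proof in the paper to match yours against, and your proposal does not close the conjecture either --- as you yourself say. The constructive part of your argument is sound as far as it goes: the circumcenter is rational, the orthogonal-bump step $p_{k+1}=c+b$ with $q(b)=t^2$ and $r^2+t^2$ a square does preserve all the invariants, and your recursion for the new squared circumradius $\bigl((r^2+t^2)/(2t)\bigr)^2$ checks out; Lemma~\ref{Meyer} does make every step with residual rank $\ge 4$ free, and the rank-$1$ step is handled correctly by the conic $\delta X^2+r^2Y^2=Z^2$ (note $r\ne 0$ there since $k\ge 1$). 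Your negative example is also correct: $x^2+2y^2+3z^2$ is not rationally equivalent to $x^2+y^2+6z^2$ (the Hasse invariants at $2$ are $(2,3)_2=-1$ versus $1$), so the orthogonal device provably cannot place the last vertices for that form.

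The gap is therefore exactly where you locate it, at residual ranks $3$ and $2$, and it is not a technicality: those two steps carry the entire number-theoretic content of the conjecture, and ``mixing coordinates instead of bumping orthogonally'' is a program, not an argument --- no construction is exhibited, and no reduction to a solvable family of ternary or quaternary equations is carried out. What your approach genuinely buys, compared with the paper, is a clean reduction of the general conjecture to a low-dimensional completion problem (extend an $(n-2)$-point rational-distance configuration with square circumradius by two more vertices), which subsumes the paper's ad hoc simplices for $I_n$ and would be a worthwhile lemma to isolate and state precisely. But as a proof of Conjecture~\ref{rational simplex} it is incomplete, and the incomplete part is the hard part.
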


\begin{theorem} \label{conj2 -> conj1}
    Conjecture \ref{rational simplex} implies conjecture \ref{isomorphism}.
\end{theorem}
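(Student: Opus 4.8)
The plan is to recover $q$, up to rational equivalence, from the isomorphism type of $G_0(\Q^n,q)$, by using Conjecture~\ref{rational simplex} to plant inside the component a rigid ``coordinate simplex'' whose pairwise squared distances are perfect squares of integers, hence are detected by the graph structure.

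Let $\phi\colon G_0(\Q^n,q_1)\to G_0(\Q^n,q_2)$ be an isomorphism (we may assume $G(\Q^n,q_1)$ is nonempty). For any $t\in\Q^n$ the translation $\x\mapsto\x+t$ is an automorphism of $G(\Q^n,q_2)$ carrying the component of $\zero$ onto the component of $t$, so composing $\phi$ with the translation by $-\phi(\zero)$ we may assume $\phi(\zero)=\zero$. The key invariant is the lemma on path counts stated above: for vertices $a,b$ of $G_0(\Q^n,q)$ one has $p_G(a,b,k)=1$ iff $q(a-b)=k^2$. Since $\phi$ preserves every $p_G(a,b,k)$, it follows that $q_1(a-b)=k^2$ is equivalent to $q_2(\phi(a)-\phi(b))=k^2$ for all vertices $a,b$ of $G_0(\Q^n,q_1)$ and all $k\in\N$; that is, $\phi$ preserves those squared distances which are perfect integer squares, together with their values. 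Next, apply Conjecture~\ref{rational simplex} to obtain affinely independent $w_0,\dots,w_n\in\Q^n$ with every $q_1(w_i-w_j)$ a square of a rational; translating by $-w_0$ (again a graph automorphism preserving all squared distances) we may take $w_0=\zero$, so that $q_1(w_i)$ and $q_1(w_i-w_j)$ are all rational squares. Let $N\in\N$ be a common denominator of all the rationals $\sqrt{q_1(w_i)}$ and $\sqrt{q_1(w_i-w_j)}$, and put $u_0=\zero$ and $u_i=Nw_i$; then $u_1,\dots,u_n$ are linearly independent and $q_1(u_i)=c_i^2$, $q_1(u_i-u_j)=c_{ij}^2$ for suitable $c_i,c_{ij}\in\N$. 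Moreover each $u_i$ lies in $G_0(\Q^n,q_1)$: since $q_1(u_i)=c_i^2$, the points $\zero,\tfrac1{c_i}u_i,\tfrac2{c_i}u_i,\dots,u_i$ form a path from $\zero$ to $u_i$.

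Put $v_i:=\phi(u_i)$ and $v_0=\zero$. Because $\phi(\zero)=\zero$ and the squared distances $q_1(u_i)=q_1(u_i-u_0)$ and $q_1(u_i-u_j)$ are perfect integer squares, the equivalence above gives $q_2(v_i)=q_1(u_i)$ and $q_2(v_i-v_j)=q_1(u_i-u_j)$ for all $i,j$. Hence the Gram matrices $\{(u_i,u_j)_{q_1}\}_{i,j=1}^n$ and $\{(v_i,v_j)_{q_2}\}_{i,j=1}^n$ are equal; in particular the second is nonsingular, so $v_1,\dots,v_n$ are linearly independent. Applying Lemma~\ref{M eqQ q} once with $q_1$ and $u_1,\dots,u_n$ and once with $q_2$ and $v_1,\dots,v_n$, we see that $q_1$ and $q_2$ are both rationally equivalent to the quadratic form with this common matrix, so $q_1\eqQ q_2$. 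For the ``moreover'' assertion, let $T\in GL(n,\Q)$ be the linear map with $Tu_i=v_i$; then $q_2(T\x)=q_1(\x)$ for all $\x$, since the associated polar bilinear forms agree on the basis $\{u_i\}$. Composing $\phi$ with $T^{-1}$ (a graph automorphism of $G_0(\Q^n,q_1)$ fixing $\zero$) reduces the problem to the following claim: an automorphism $\psi$ of $G_0(\Q^n,q)$ that fixes the affinely independent points $\zero,u_1,\dots,u_n$ must be the identity; granting this, $\phi=T$.

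To prove the claim I would use the following observation: if $a$ and $b$ are fixed points of $\psi$ with $q(a-b)=k^2$ a perfect square, then $\psi$ maps the unique length-$k$ path from $a$ to $b$ onto itself vertex by vertex --- it is the only length-$k$ path joining the two fixed endpoints, and it carries a distinguished direction --- so every vertex of that path is fixed as well. Iterating this ``fill in the path between two fixed points that lie at a perfect-square distance'' operation, starting from the affinely spanning skeleton $\{\zero,u_1,\dots,u_n\}$, yields an increasing chain of $\psi$-fixed subsets of $G_0(\Q^n,q)$. It remains to show that this chain exhausts the whole component, and \emph{this} is the main obstacle: one must prove that the path-closure of a single affinely spanning simplex with integral square side lengths is all of $G_0(\Q^n,q)$. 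I expect this to require the connectivity of the component together with a careful choice of the scaling $N$ --- for instance divisible by a large power of $2$, so that iterated midpoints of skeleton edges again lie at perfect-square distances and can be used to subdivide the simplex and reach arbitrary vertices. Once that is in place, $\psi=\mathrm{id}$, hence $\phi=T$, and the theorem follows.
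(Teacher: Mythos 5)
Your derivation of $q_1 \eqQ q_2$ is correct and is essentially the paper's own argument: produce a simplex with rational side lengths via Conjecture~\ref{rational simplex}, scale it to integer side lengths, transport the squared distances through the isomorphism using the path-counting lemma, and apply Lemma~\ref{M eqQ q} to the two equal Gram matrices. You are in fact more careful than the paper at several points it glosses over: you normalize so that $\phi(\zero)=\zero$, you verify that the scaled simplex vertices actually lie in the component $G_0(\Q^n,q_1)$ (via the straight paths $\zero,\tfrac1{c_i}u_i,\dots,u_i$), and you check that the image points $v_1,\dots,v_n$ are linearly independent before invoking Lemma~\ref{M eqQ q}.

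The gap is in the ``moreover'' clause, and you have correctly identified it yourself: after reducing to an automorphism $\psi$ of $G_0(\Q^n,q_1)$ fixing $\zero,u_1,\dots,u_n$, you still must show $\psi=\mathrm{id}$, i.e.\ that the closure of one spanning simplex under ``fill in the unique path between two fixed points at perfect-square distance'' exhausts the whole component. This is not a routine detail: for a generic vertex $z$ the quantities $q_1(z-u_i)$ are not perfect squares, so the skeleton alone pins down nothing about $z$, and what is needed here is essentially the full rigidity content of a Beckman--Quarles-type theorem. So your proposal does not prove the second assertion of Conjecture~\ref{isomorphism}. For what it is worth, the paper's printed proof is silent on the ``moreover'' clause altogether --- it only establishes $q_1\eqQ q_2$ --- so your attempt goes further than the paper's, but neither argument closes this gap.
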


\begin{proof}
    Let  $q_1, q_2 \in \mcQ_n$, such that $G_1 = G_0(\Q^n, q_1)$ and $G_2 = G_0(\Q^n, q_2)$ are isomorphic. By conjecture~\ref{rational simplex} there exists a simplex with rational edges in $(\Q^n, q_1)$.
Because we can find a simplex with rational sides we can find a simplex with natural lengths, i.e. there are points $x_0, x_1, \ldots, x_n \in \Q^n$ such that for any $1 \leq i, j \leq n$ there holds $q_1(x_i - x_j) = \ell_{ij}^2$, where $\ell_{ij} \in \N$. 
    
    This means that there are points $x_1, x_2, \ldots, x_n \in \Q^n$, such that $p_{G_1}(x_i, x_j, \ell_{ij}) = 1$. Since $G_1 \simeq G_2$, there are points $y_0, y_1, \ldots, y_n \in \Q^n$, such that $p_{G_2}(y_i, y_j, \ell_{ij}) = 1$ and $q_2(y_i - y_j) = \ell_{i,j}^2$.

    Using lemma~\ref{M eqQ q} we get that forms $q_1$ and $q_2$ are both equivalent to a quadratic form with matrix $\Big\{\frac{\ell_{1i}^2 + \ell_{1j}^2 - \ell_{ij}^2}{2}\Big\}_{i, j = 1}^{n}$, therefore are equivalent one to another.
\end{proof}

\begin{theorem}
    Conjectures~\ref{rational simplex} and~\ref{isomorphism} are true provided by $n = 2$.
\end{theorem}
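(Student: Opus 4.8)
The plan is to establish Conjecture~\ref{rational simplex} in the case $n = 2$; Conjecture~\ref{isomorphism} for $n = 2$ then follows from Theorem~\ref{conj2 -> conj1}, whose proof only invokes Conjecture~\ref{rational simplex} in the same dimension. So the whole task reduces to: given $q \in \mcQ_2$ with $G_0(\Q^2, q)$ nonempty, produce three affinely independent points of $\Q^2$ whose three pairwise $q$-distances are all squares of rationals. The first step is to normalise $q$. Nonemptiness of $G_0(\Q^2, q)$ means exactly that $q(\x) = 1$ has a rational solution, so by Lemma~\ref{I_1 in q} we have $q \eqQ x^2 + b y^2$ for some rational $b > 0$. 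Both the hypothesis and the conclusion of Conjecture~\ref{rational simplex} are preserved by $\eqQ$ (push the simplex forward through the linear transformation), so I may assume from the outset that $q = x^2 + b y^2$.

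The key idea is to look for a triangle of the special shape
\[
p_0 = (0,0), \qquad p_1 = (1,0), \qquad p_2 = \Bigl(\tfrac12,\, v\Bigr),
\]
with $v \in \Q$ to be chosen. Then $q(p_1 - p_0) = 1$ automatically, and by symmetry $q(p_2 - p_0) = q(p_2 - p_1) = \tfrac14 + b v^2$, so the only remaining requirement is that $\tfrac14 + b v^2$ be a rational square for some $v \ne 0$ (and $v \ne 0$ is precisely what makes $p_0, p_1, p_2$ affinely independent). Writing $\tfrac14 + b v^2 = s^2$ and clearing the $\tfrac14$, this amounts to asking the conic
\[
\mathcal{C}\colon\quad X^2 - b Y^2 = 1
\]
to carry a rational point with $Y \ne 0$. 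But $\mathcal{C}$ visibly contains the rational point $(1,0)$, and a smooth plane conic with one rational point has infinitely many: intersecting $\mathcal{C}$ with the pencil of lines $X = 1 + \lambda Y$ yields, for every $\lambda \in \Q$ with $\lambda^2 \ne b$, the second intersection point $\bigl(\tfrac{b + \lambda^2}{b - \lambda^2},\, \tfrac{2\lambda}{b - \lambda^2}\bigr) \in \Q^2$. Choosing any nonzero such $\lambda$ gives $Y \ne 0$; set $v = Y/2$. This produces the desired rational triangle, hence Conjecture~\ref{rational simplex} for $n = 2$, and then Theorem~\ref{conj2 -> conj1} delivers Conjecture~\ref{isomorphism} for $n = 2$.

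I do not expect a genuine obstacle. The one conceptual step is recognising that placing the third vertex over the \emph{midpoint} of the first two collapses the three distance equations into a single Pell-type conic $X^2 - bY^2 = 1$, which is automatically soluble because of the trivial point $(1,0)$. The remaining points are routine: excluding the degenerate parameter values $\lambda = 0$ (and $\lambda^2 = b$, which matters only when $b$ is a rational square), harmless since $\Q$ is infinite; and, if one wants the edge lengths to be natural numbers as in the proof of Theorem~\ref{conj2 -> conj1}, rescaling the whole triangle by a common denominator of the three lengths. The only spot calling for mild care is checking that the constructed triangle actually lands inside the component $G_0$ when one feeds it into Theorem~\ref{conj2 -> conj1} — but that theorem already performs this bookkeeping (a unit-step walk of the appropriate integer length connects each vertex to the basepoint), so nothing new is required here.
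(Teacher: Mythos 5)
Your proposal is correct and follows essentially the same route as the paper: reduce to Conjecture~\ref{rational simplex} via Theorem~\ref{conj2 -> conj1}, normalise $q \eqQ x^2 + by^2$ by Lemma~\ref{I_1 in q}, and exhibit an isosceles rational triangle with apex over the midpoint of the base (the paper's explicit triangle $(0,0),(n-1,2),(-n+1,2)$ is exactly your $\lambda = 1$ specialisation). Your parametrised version is in fact slightly more robust, since the paper's single triangle degenerates when the coefficient equals $1$ (i.e.\ for $I_2$), a case your freedom in choosing $\lambda$ handles automatically.
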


\begin{proof}
Consider $q \in \mcQ_2$ such that $G_0(\Q^2, q)$ is nonempty. Using theorem~\ref{I_1 in q} we can say that $q \eqQ x^2 + ny^2$ and we can check that there is a triangle with rational sides in $(\Q^2, x^2 + ny^2)$.

For this we need to consider triangle with coordinates $(0, 0), (n - 1, 2), (-n + 1, 2)$. Its sides are equal to $n + 1, n + 1$ and $2n - 2$.
\end{proof}

\begin{theorem}[Rational Beckman--Quarles theorem] 
Conjectures~\ref{rational simplex} and~\ref{isomorphism} hold $q_1 = q_2 = I_n$ $(n > 1)$.
\end{theorem}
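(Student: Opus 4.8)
The plan is to deduce this from Theorem~\ref{conj2 -> conj1}. Its proof invokes Conjecture~\ref{rational simplex} only for the first form, so it actually shows: if Conjecture~\ref{rational simplex} holds for a given $q_1$, then Conjecture~\ref{isomorphism} holds for that $q_1$ and \emph{every} $q_2$. Hence it suffices to verify Conjecture~\ref{rational simplex} in the single case $q = I_n$, i.e.\ to exhibit an $(n+1)$-simplex with rational side lengths in $(\Q^n, I_n)$: affinely independent points $v_0,\dots,v_n \in \Q^n$ with every Euclidean distance $|v_i - v_j|$ rational. (The hypothesis of Conjecture~\ref{rational simplex} is automatic, since $0$ and $e_1$ are adjacent, so $G_0(\Q^n, I_n)$ is nonempty.) Once such a simplex is produced, Theorem~\ref{conj2 -> conj1} gives that any $q_2$ with $G_0(\Q^n, q_2) \simeq G_0(\Q^n, I_n)$ satisfies $q_2 \eqQ I_n$ via a linear isomorphism; specialising $q_2 = I_n$ is the desired Beckman--Quarles statement.

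The rational simplex is constructed by induction on $n$. For $n = 1$ take $v_0 = 0$, $v_1 = 1$. For the inductive step, let $w_0, \dots, w_{n-1} \in \Q^{n-1}$ be affinely independent with rational pairwise distances. These $n$ points affinely span $\Q^{n-1}$, so they have a unique circumcenter $c$, determined by the linear system $\langle c,\, w_i - w_0\rangle = \tfrac12\bigl(|w_i|^2 - |w_0|^2\bigr)$ for $i = 1, \dots, n-1$; since the vectors $w_i - w_0$ are linearly independent this system is nonsingular over $\Q$, so $c$ is rational. Set $R^2 := |c - w_0|^2 = \dots = |c - w_{n-1}|^2 \in \Q_{>0}$. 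The only arithmetic input is: for every $R^2 \in \Q_{>0}$ there is $h \in \Q \setminus \{0\}$ with $R^2 + h^2$ a square in $\Q$, seen by writing $R^2 + h^2 = y^2$ as $(y-h)(y+h) = R^2$ and taking $y - h = R^2 t$, $y + h = 1/t$ for a suitable rational $t \neq 0$, which gives rational $y$ and nonzero rational $h$. Now put $v_i = (w_i, 0) \in \Q^{n-1} \times \Q$ for $i \le n-1$ and $v_n = (c, h)$. Then $|v_i - v_j| = |w_i - w_j| \in \Q$ for $i, j \le n-1$, while $|v_n - v_i|^2 = |c - w_i|^2 + h^2 = R^2 + h^2$ is a rational square, so $|v_n - v_i| \in \Q$; and affine independence is clear because $v_n$ is the only one of these points with nonzero last coordinate.

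I do not expect a real obstacle here. The only steps needing justification are the rationality of the circumcenter --- it solves a rational linear system that is nonsingular by affine independence --- and the arithmetic lemma that adding a nonzero rational square turns a positive rational into a rational square; both are routine. The one idea to isolate is that passing to the circumcenter collapses the $n$ conditions ``$|v_n - v_i|$ is rational'' into the single condition ``$R^2 + h^2$ is a rational square'', which is always solvable over $\Q$. (Running the induction from $n = 1$ to $n = 2$, one may instead simply point to the $3$--$4$--$5$ triangle $(0,0),(3,0),(3,4)$, complementing the small-dimensional examples given earlier.)
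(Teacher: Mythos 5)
Your proposal is correct, but it proves the existence of the rational-distance simplex by a genuinely different method than the paper. The paper writes down explicit coordinates: for even $n$ it takes the $n$ points obtained from blocks $(1,\pm 1)$ placed in consecutive coordinate pairs together with the extra point $(1, 1/15, 0, \ldots, 0)$, and for odd $n$ it appends $(0,\ldots,0,7/4)$; one then verifies directly that all pairwise squared distances are rational squares (e.g.\ $1 + \tfrac{1}{225} + 1 + 1 = (\tfrac{26}{15})^2$ and $2 + \tfrac{49}{16} = (\tfrac{9}{4})^2$). Your inductive construction --- lift a rational-distance simplex from $\Q^{n-1}$ into the hyperplane $x_n = 0$, compute its (rational) circumcenter $c$ and circumradius $R$, and adjoin $(c,h)$ where $h$ is chosen so that $R^2 + h^2 \in \Q_*^2$ --- is sound: the circumcenter solves a nonsingular rational linear system, the parametrisation $y-h = R^2 t$, $y+h = 1/t$ produces the required $h \neq 0$ for all but finitely many $t$, and affine independence is immediate from the nonzero last coordinate. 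Your reduction to Theorem~\ref{conj2 -> conj1} is the same as the paper's, and your observation that its proof only invokes Conjecture~\ref{rational simplex} for $q_1$ is accurate (though not needed when $q_1 = q_2 = I_n$). The trade-off: the paper's construction is a one-line verification once the coordinates are guessed, while yours requires no magic constants, makes transparent \emph{why} such simplices exist in every dimension, and isolates the reusable arithmetic fact that every positive rational becomes a square after adding a suitable nonzero rational square.
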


\begin{proof}
Using theorem~\ref{conj2 -> conj1} we need to find rational simplex in $\Q^n$ with rational sides.

For even $n$ we can consider the following simplex:
\[
\begin{matrix}
(1, &  1, & 0, &  0, & \ldots & 0, & 0), \\
(1, & -1, & 0, &  0, & \ldots & 0, & 0), \\
(0, &  0, & 1, &  1, & \ldots & 0, & 0), \\
(0, &  0, & 1, & -1, & \ldots & 0, & 0), \\
\ldots &  \ldots & \ldots & \ldots & \ddots & \ldots & \ldots \\
(0, &  0, & 0, &  0, & \ldots & 1, & 1), \\
(0, &  0, & 0, &  0, & \ldots & 1, & -1), \\
(1, &  1 /15, & 0, &  0, & \ldots & 0, & 0) \\
\end{matrix}
\]

For odd $n$ we can consider the following simplex:
\[
\begin{matrix}
(1, &  1, & 0, &  0, & \ldots & 0, & 0, & 0), \\
(1, & -1, & 0, &  0, & \ldots & 0, & 0, & 0), \\
(0, &  0, & 1, &  1, & \ldots & 0, & 0, & 0), \\
(0, &  0, & 1, & -1, & \ldots & 0, & 0, & 0), \\
\ldots &  \ldots & \ldots & \ldots & \ddots & \ldots & \ldots & \ldots \\
(0, &  0, & 0, &  0, & \ldots & 1, & 1, & 0), \\
(0, &  0, & 0, &  0, & \ldots & 1, & -1, & 0), \\
(1, &  1/15, & 0, &  0, & \ldots & 0, & 0, & 0) \\
(0, &  0, & 0, &  0, & \ldots & 0, & 0, & 7/4) \\
\end{matrix}
\]

It is easy to see that all sides of these simplices are rational.

\end{proof}

Lemma \ref{embed} shows that if $q_1 \embed q_2$ then $G(\Q^{\dim q_1}, q_1) \subseteq G(\Q^{\dim q_2}, q_2)$. Does the contrary hold?

\begin{proposition}
Consider $q(x, y, z) = \frac{1}{3}(x^2 + y^2 + 2z^2)$. Then
    $G(\Q^2, I_2) \subset G(\Q^3, q_3)$, but $I_2 \notembed q$.
\end{proposition}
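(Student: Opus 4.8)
The plan is to establish the two claims separately, as they require genuinely different arguments.

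\textbf{Part 1: the inclusion $G(\Q^2, I_2) \subset G(\Q^3, q)$.} First I would show that $I_2 + r \eqQ 3q = x^2 + y^2 + 2z^2$ for a suitable $r \in \mcQ_1$, which by Lemma~\ref{embed} (applied after noting $G(\Q^3, q) \simeq G(\Q^3, 3q')$ only up to scaling — so I must be careful here, see below) would give the inclusion. Actually the cleaner route: the edges of $G(\Q^3, q)$ are the rational solutions of $x^2 + y^2 + 2z^2 = 3$. I want to exhibit a rational plane inside $\Q^3$ on which $q$ restricts to a form equivalent to $I_2$. The obvious candidate is the plane $z = 1$ (an affine slice, which translates to a linear subspace after recentering at a point where it meets a lattice point); on $z=1$ the equation $q(\x-\y)=1$ for points of that plane becomes $x^2+y^2 = 3 - 2 = 1$ in the difference coordinates — wait, differences of points with $z=1$ have $z$-coordinate $0$, so I instead want a coset. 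Let me reconsider: take the affine plane $P = \{(x,y,z) : z = c\}$ for fixed rational $c$; two points of $P$ differ by a vector $(u, v, 0)$, and they are adjacent in $G(\Q^3,q)$ iff $\frac13(u^2+v^2) = 1$, i.e. $u^2 + v^2 = 3$. That is $G(\Q^2, \frac13 I_2)$, not $G(\Q^2, I_2)$. So the slice must be non-axis-parallel. I would instead look for a rational $2$-dimensional linear subspace $W \subset \Q^3$ such that $q|_W \eqQ I_2$; concretely, find two $q$-orthogonal rational vectors $w_1, w_2$ with $q(w_1) = q(w_2) = 1$, equivalently integer vectors with $x^2+y^2+2z^2 = 3$ that are orthogonal under the bilinear form associated to $x^2+y^2+2z^2$. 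For instance $(1,0,1)$ and $(0,1,-1)$: check $1+0+2=3$, $0+1+2=3$, bilinear form $1\cdot0 + 0\cdot1 + 2\cdot1\cdot(-1) = -2 \neq 0$, so not orthogonal; try $(1,1,0)$ is not a solution. The right pair likely requires a short search, or one uses that the genus of $x^2+y^2+2z^2$ representing $3$ twice orthogonally is non-obstructed — but for the write-up it suffices to display one explicit orthonormal-over-$\Q$ pair and verify $q|_W \eqQ I_2$ via Lemma~\ref{M eqQ q}.

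\textbf{Part 2: the non-embedding $I_2 \notembed q$.} By definition $I_2 \embed q$ would mean there is $r \in \mcQ_1$, say $r = ax^2$ with $a > 0$ rational, such that $I_2 \oplus r = x^2 + y^2 + ax^2 \eqQ q = \frac13(x^2+y^2+2z^2)$. The standard tool is rational equivalence invariants: equal dimension (both $3$, fine), equal determinant in $\Q^*/(\Q^*)^2$, and equal Hasse--Witt invariants at every place (including $\infty$, where both are positive definite, so that is automatic). So the plan is to compute $\det$ and the Hasse invariants $\varepsilon_p$ for $q$ and for $I_2 \oplus \langle a\rangle$, and show that for \emph{no} choice of $a$ do all local invariants match. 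The determinant of $q$ is $\frac{1\cdot1\cdot2}{27} = \frac{2}{27} \equiv 2 \cdot 3 = 6 \pmod{(\Q^*)^2}$ (since $27 = 3^3 \equiv 3$), while $\det(I_2 \oplus \langle a\rangle) = a$. So we'd need $a \equiv 6 \pmod{(\Q^*)^2}$, i.e. $a = 6b^2$; substituting, the question becomes whether $x^2 + y^2 + 6z^2 \eqQ \frac13(x^2+y^2+2z^2)$. Scaling the right side by $3$ (which changes the form to $x^2+y^2+2z^2$ but scaling is \emph{not} a rational equivalence — however $G(\Q^n, q) \simeq G(\Q^n, \lambda q)$ fails, so I must work with $q$ itself): compare Hasse invariants of $\langle 1,1,6\rangle$ and $\langle \tfrac13,\tfrac13,\tfrac23\rangle$ at $p = 2$ and $p = 3$. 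I expect a mismatch at $p=3$: the form $\frac13 I_3$-type form has a factor $3$ in the denominators contributing a nontrivial Hilbert symbol $(\tfrac13, \tfrac13)_3$ or $(3,2)_3$, whereas $\langle1,1,6\rangle$ has Hasse invariant built from $(1,1)_3 = (1,6)_3 = (1,6)_3 = 1$ and $(1,6)_3$... all trivial except possibly $(6,6)$-type terms. Working out $\varepsilon_3$ explicitly for both and exhibiting the discrepancy is the concrete computation.

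\textbf{Main obstacle.} The genuine difficulty is Part 2's local computation: one must correctly normalize the diagonal entries of $q = \langle \tfrac13, \tfrac13, \tfrac23\rangle$, clear squares and denominators place by place, and compute the Hasse--Witt invariant $\varepsilon_p(q) = \prod_{i<j}(a_i, a_j)_p$ at $p = 2, 3$ using the Hilbert symbol, then compare against $\varepsilon_p(\langle 1,1,6b^2\rangle) = \varepsilon_p(\langle1,1,6\rangle)$ for all $b$; since $b^2$ is a local square this is well-defined. The subtlety is that there are potentially two free parameters if one allows $r = \langle a \rangle$ with $a$ not a priori fixed — but the determinant constraint pins $a$ down to $6$ modulo squares, after which it is a single clean Hasse-invariant comparison, and I expect exactly one bad prime ($p=3$) witnessing $I_2 \notembed q$. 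I would present the computation as a short table of Hilbert symbols at $p=2,3,\infty$.
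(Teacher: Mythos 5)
Your Part~2 is sound and matches the paper: pin down the determinant of the candidate complement ($a \equiv 6 \bmod \Q_*^2$) and then exhibit a Hasse-invariant mismatch at $p=3$ (indeed $\varepsilon_3(\langle 1,1,6\rangle)=1$ while $\varepsilon_3(\langle \tfrac13,\tfrac13,\tfrac23\rangle)=(3,3)_3=(3,-1)_3=-1$), which is exactly what Lemma~\ref{forms embed} packages. You leave the computation as a plan, but the plan works.

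Part~1, however, has a fatal gap: the strategy you settle on --- find two $q$-orthogonal rational vectors $w_1, w_2$ with $q(w_1)=q(w_2)=1$, so that $q$ restricted to their span is $\eqQ I_2$ --- is precisely the statement $I_2 \embed q$ (by Witt decomposition, a nondegenerate $2$-dimensional subspace on which $q$ restricts to $I_2$ splits off an orthogonal complement, giving $q \eqQ I_2 \oplus \langle a\rangle$). That is exactly what your Part~2 disproves. So the ``short search'' for an orthonormal pair can never terminate successfully, and no linear or affine copy of $(\Q^2, I_2)$ sits inside $(\Q^3, q)$. The whole content of the proposition is that the subgraph containment holds \emph{despite} this, so the embedding must be non-affine. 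The paper's device is bipartiteness: $G(\Q^2, I_2)$ is bipartite (cited from Benda--Perles), so one can send one part to the plane $z=0$ and the other to $z=1$. Every edge then has difference vector $(u,v,\pm 1)$ with $u^2+v^2=1$, whence $q$ of the difference equals $\tfrac13(1+2)=1$; non-edges are irrelevant for a subgraph embedding, and the map is injective. Without some such piecewise/combinatorial construction, Part~1 of your argument cannot be completed.
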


\begin{proof}
As shown in article \cite{Benda2000}, graph $G(\Q^2, I_2)$ is bipartite, let $G(\Q^2, I_2) = G_1 \sqcup G_2$ where $G_1, G_2$ --- are parts of this graph.
Consider following mapping $\phi: \Q^2 \to \Q^3$

    $$\phi(x, y) = \begin{cases}
    (x_1, x_2, 0), & \text{if } (x_1, x_2) \in G_1 \\
    (x_1, x_2, 1), & \text{if } (x_1, x_2) \in G_2
    \end{cases}
    $$

It is easy to see that $(\x, \y)$ is the edge of $G(\Q^2, I_2)$ then $(\phi(\x), \phi(\y))$ is the edge of $G(\Q^3, q_3)$.

Now using the  statement \ref{forms embed} we obtain that $I_2 \notembed q$.
    
\end{proof}

\appendix

\section{Rational quadratic forms}\label{appendix}

Let $\Nu = \{p$ : $p$ is prime$\} \cup \{\infty\}$. We denote by $\Q_{\infty} = \R$, and by $\Q_p$ the $p$-adic number field for a prime $p$.
We also denote $\Q_* = \Q \backslash \{0\}$ and  $\Q_*^2 = \{ x^2 | x \in \Q_* \}$. 

Let $A = \diag\left(a_1, \ldots, a_n\right),$  where $a_i \in \Q$ and $a_i> 0$. Then 

\begin{itemize}
	\item $\displaystyle D\left(A\right) = \prod_{i=1}^{n} a_i$ is a determinant of $A$
	\item For any $\nu \in \Nu$  we denote a \textit{Hasse--Minkowski invariant} $$E_{\nu}\left(A\right) = \prod_{i < j} \left(a_i, a_j\right)_{\nu} \in \{\pm 1\}$$
        where $\left(a, b\right)_{\nu}: \left(\Q_* / \Q_*^2\right)^2 \rightarrow \{\pm 1\}$ is the Hilbert symbol on $\Q_{\nu}$. 
\end{itemize}

Hilbert symbol $\left(a, b\right)_\nu$ equals 1 iff equation $z^2 = ax^2 + by^2$ has nontrivial solution $\left(x, y, z\right)$ over $\Q_\nu^3$.
This value is easily computable and has following properties \cite{Serre1973}:

\begin{multicols}{2}
\begin{itemize}
	\item $\left(a, b\right)_\nu = \left(b, a\right)_\nu$;
	\item  $\left(a, c^2\right)_\nu = 1$;
	
	\item $\left(a, -a\right)_\nu = \left(a, 1 - a\right)_\nu = 1$;

\columnbreak
	\item $\left(a, bc\right)_\nu = \left(a, b\right)_\nu \cdot \left(a, c\right)_\nu$;
	\item  $\left(a, ab\right)_\nu = \left(a, -b\right)_\nu$;

	\item $\displaystyle \prod_{\nu \in \Nu} \left(a, b\right)_\nu = 1$.
\end{itemize}

\end{multicols}


In \cite{Serre1973} it is shown that $D \mod \Q_{*}^2$ and $E_\nu$ for $\nu \in \Nu$ are the only invariants of rationally positive definite quadratic forms.

\begin{lemma}\label{forms embed}
	$A \embed B$ iff there exists matrix $X$ such that 
	\begin{itemize}
		\item $\dim X = \dim B - \dim A$; 
		\item $D\left(X\right) \equiv D\left(A\right)D\left(B\right) \mod \Q_*^2$;
		\item for every $\nu$ there holds $E_\nu\left(X\right) = E_\nu\left(B\right) \cdot E_\nu\left(A\right) \cdot \left(D\left(A\right), -D\left(B\right)\right)_\nu$.
	\end{itemize}
\end{lemma}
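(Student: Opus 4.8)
The plan is to reduce everything to the classification theorem for positive definite rational forms stated just before the lemma --- that the determinant modulo $\Q_*^2$ together with the Hasse--Minkowski invariants $E_\nu$ form a complete system of invariants --- and then to carry out the one genuinely computational step, namely expressing $D$ and $E_\nu$ of an orthogonal sum in terms of the invariants of its summands. Since $D$ and $E_\nu$ are invariants of the equivalence class, I may assume $A$, $X$, $B$ are all diagonal. By definition $A \embed B$ means precisely that $B \eqQ A \oplus X$ for some positive definite $X$ with $\dim X = \dim B - \dim A$, so the whole lemma amounts to describing the invariants such a complementary summand $X$ must carry.

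First I would establish the orthogonal-sum formulas. For diagonal $A = \diag(a_1,\dots,a_k)$ and $X = \diag(x_1,\dots,x_m)$ one immediately gets $D(A \oplus X) = D(A)\,D(X)$, while the Hasse invariant of $A \oplus X$ is the product of Hilbert symbols over all pairs of diagonal entries, which splits into the pairs internal to $A$, those internal to $X$, and the cross pairs:
\[
E_\nu(A \oplus X) = E_\nu(A)\,E_\nu(X)\prod_{i,j}(a_i, x_j)_\nu .
\]
By bilinearity of the Hilbert symbol the cross term collapses, since for fixed $i$ we have $\prod_j (a_i, x_j)_\nu = (a_i, D(X))_\nu$ and then $\prod_i (a_i, D(X))_\nu = (D(A), D(X))_\nu$, giving the clean identity $E_\nu(A \oplus X) = E_\nu(A)\,E_\nu(X)\,(D(A), D(X))_\nu$.

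For the forward direction I would assume $B \eqQ A \oplus X$ and equate invariants. From $D(B) \equiv D(A) D(X)$ and $D(A)^{-1} \equiv D(A) \pmod{\Q_*^2}$ one gets $D(X) \equiv D(A) D(B)$. Substituting this class into the cross term and using the identity $(a, ab)_\nu = (a, -b)_\nu$ turns $(D(A), D(X))_\nu = (D(A), D(A) D(B))_\nu$ into $(D(A), -D(B))_\nu$; solving $E_\nu(B) = E_\nu(A) E_\nu(X)(D(A), -D(B))_\nu$ for $E_\nu(X)$ (all values being $\pm 1$, hence self-inverse) yields exactly the stated formula. The reverse direction runs the same computation backwards: given an $X$ with the listed invariants, the orthogonal-sum formulas show that $A \oplus X$ has determinant $\equiv D(B)$ and, after the same substitution together with $E_\nu(A)^2 = 1$ and $(D(A),-D(B))_\nu^2 = 1$, Hasse invariants equal to $E_\nu(B)$ at every place; since $A \oplus X$ and $B$ are both positive definite of dimension $\dim B$, the classification theorem forces $A \oplus X \eqQ B$, i.e. $A \embed B$.

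The main obstacle is the correct bookkeeping of the Hasse invariant under orthogonal sum --- in particular justifying that the cross term contracts to $(D(A), D(X))_\nu$ via repeated use of $(a, bc)_\nu = (a,b)_\nu (a,c)_\nu$, and then steering the two identities $(a, ab)_\nu = (a,-b)_\nu$ and $(a, c^2)_\nu = 1$ so that the answer lands on $(D(A), -D(B))_\nu$ exactly as written. Everything else is forced by the completeness of the invariant system; the only point needing care beyond symbol algebra is that in the reverse direction $X$ is assumed positive definite, so that $A \oplus X$ genuinely lies in $\mcQ_{\dim B}$ and the classification theorem applies.
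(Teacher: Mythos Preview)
Your proposal is correct and follows essentially the same route as the paper: diagonalize, use the orthogonal-sum formulas $D(A\oplus X)=D(A)D(X)$ and $E_\nu(A\oplus X)=E_\nu(A)E_\nu(X)(D(A),D(X))_\nu$, then substitute $D(X)\equiv D(A)D(B)$ and apply $(a,ab)_\nu=(a,-b)_\nu$. If anything, you are slightly more careful than the paper in separating the two directions and in explicitly invoking the classification theorem for the converse.
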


\begin{proof}
	Without loss of generality we assume that $A = \diag\left(a_1, \ldots, a_n\right)$ and $B = \diag\left(b_1, \ldots, b_m\right)$. $A \embed B$ iff there exists $X = \diag\left(x_{n+1}, \ldots, x_m\right)$ such that $C = \diag\left(a_1, \ldots a_n, x_{n+1}, \ldots, x_m\right) \eqQ B$.
	
	As stated above, for $C \eqQ B$ we need to satisfy the following two conditions:
	\begin{itemize}
		\item 	$D\left(C\right) \equiv D\left(B\right) \mod \Q_*^2$
		\item $E_\nu(C) = E_\nu(B)$ for every $\nu \in \Nu$.
	\end{itemize}
	
	$D\left(C\right) = D\left(A\right) D\left(X\right) \equiv D\left(B\right) \mod \Q_*^2$ so $D\left(X\right) \equiv D\left(A\right)D\left(B\right) \mod \Q_*^2$.
	
	Then for every $\nu \in \Nu$ we get 
	$$E_\nu\left(B\right) = E_\nu\left(C\right) = \prod_{i <j \leqslant n} \left(a_i, a_j\right)_\nu \cdot \prod_{i = 1}^{n} \prod_{j=n+1}^{m} \left(a_i, x_j\right)_\nu \cdot \prod_{n+1 \leqslant i < j } \left(x_i, x_j\right)_\nu = E_\nu\left(A\right) \cdot \left(D\left(A\right), D\left(X\right)\right)_\nu \cdot E_\nu\left(X\right)$$
	
	$$E_\nu\left(X\right) = E_\nu\left(A\right) \cdot E_{\nu}\left(B\right) \cdot \left(D\left(A\right), D\left(A\right)D\left(B\right)\right)_\nu  = E_\nu\left(A\right) \cdot E_{\nu}\left(B\right) \cdot \left(D\left(A\right), -D\left(B\right)\right)_\nu$$
\end{proof}

Now we need to state Proposition 7 in Chapter 4 from \cite{Serre1973}.

\begin{proposition}
	Let numbers $D$, $E_\nu$ and $\left(r, s\right)$	satisfy the following conditions:
	\begin{enumerate}
		\item $E_v = 1$ for almost all $\nu \in \Nu$ and $\prod_{\nu \in \Nu} E_\nu = 1$
		\item $E_\nu = 1$ if $n = 1$ or if $n = 2$ and image of $D_v$ of $D$ in $\Q_\nu / \Q_\nu^2$ equals $-1$.
		\item $r, s \geq 0$ and $r + s = n$
		\item $D_\infty = \left(-1\right)^s$, \quad  $E_\infty = \left(-1\right)^{s\left(s-1\right)/2}$ 
	\end{enumerate}
	Then there exists quadratic form of rank $n$ over $\Q$ with invariants $D, E_\nu$ and $\left(r, s\right)$, where $\left(r, s\right)$ is the signature of form $X$.
\end{proposition}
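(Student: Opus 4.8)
This is the local-to-global existence theorem for quadratic forms over $\Q$, and the plan is to prove it by induction on the rank $n=r+s$, splitting off one rational rank-one form at each step. Recall first that over each completion $\Q_\nu$ a nondegenerate form is determined by its rank, its determinant class in $\Q_\nu^*/(\Q_\nu^*)^2$, its Hasse invariant, and (for $\nu=\infty$) its signature, so the data $(D,\{E_\nu\}_\nu,(r,s))$ single out a unique local form $f_\nu$ at every place, and the task is to produce one form over $\Q$ inducing all of them. For $n=1$ the form is $ax^2$, the Hasse invariant is automatically trivial, and conditions (3)--(4) merely prescribe the square class and the sign of $a$, so any such rational $a$ works. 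For $n=2$ I would write the form as $ax^2+(D/a)y^2$; the Hilbert-symbol identities give $E_\nu=(a,-D)_\nu$, so I need $a\in\Q_*$ with $(a,-D)_\nu=E_\nu$ for all $\nu$, and conditions (1)--(2) are exactly the compatibility hypotheses of the existence theorem for prescribed Hilbert symbols (\cite{Serre1973}, Ch.~III, Thm.~4) --- note that $-D$ is a square in $\Q_\nu$ precisely when $D_\nu=-1$ --- which yields such an $a$, while (4) reconciles the signs of $a$ and $D/a$ with $(r,s)$.

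For the inductive step, with $n\geq 3$, I would look for $f$ of the form $ax^2\perp f'$, $\dim f'=n-1$. Since $D(A\perp X)\equiv D(A)D(X)$ and $E_\nu(A\perp X)=E_\nu(A)E_\nu(X)(D(A),D(X))_\nu$ --- the identities already derived inside the proof of Lemma~\ref{forms embed} --- the form $f'$ is forced to have
\[
D(f')\equiv aD \pmod{\Q_*^2}, \quad E_\nu(f')=E_\nu\cdot(a,-D)_\nu \quad (\nu\in\Nu),
\]
and signature $(r-1,s)$ or $(r,s-1)$ according to the sign of $a$; conversely any $f'$ with these invariants makes $ax^2\perp f'$ carry the prescribed ones. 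For the decomposition to be available locally I need $a$ to be represented by $f_\nu$ at every place, so I would take $S\subset\Nu$ to be the finite set of the infinite place, of $2$, of the primes dividing $D$, and of the primes where $E_\nu=-1$; at each $\nu\in S$ pick $a_\nu$ represented by $f_\nu$ (the represented values form a nonempty open set --- a rank-$\geq 3$ form over $\Q_\nu$ omits at most one square class), with $a_\infty$ of a sign permitted by $(r,s)$. By weak approximation I then get $a\in\Q_*$ close to each $a_\nu$ at $\nu\in S$ and, using Dirichlet's theorem on primes in arithmetic progressions, with $v_\nu(a)$ even at every $\nu\notin S$ apart from one large auxiliary prime $\ell$, which may moreover be chosen so that $-D$ is a square modulo $\ell$. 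For $\nu\notin S$ the form $f_\nu$ then has rank $\geq 3$, unit determinant class and trivial Hasse invariant; since $\nu$ is odd and an anisotropic form over $\Q_\nu$ of rank $\geq 3$ with unit determinant has Hasse invariant $-1$, $f_\nu$ is isotropic, hence universal, and represents $a$ automatically. Thus $a$ is represented everywhere, the local decompositions $f_\nu\cong ax^2\perp f'_\nu$ exist, the displayed invariants of $f'$ satisfy (1), (3), (4) by the product formula, Hilbert reciprocity and a routine check at $\infty$, the induction hypothesis produces $f'$ over $\Q$, and $f:=ax^2\perp f'$ completes the step.

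The hard part will be verifying condition (2) for $f'$, which is a genuine constraint only when $n=3$ (so $\dim f'=2$): I must guarantee that $E_\nu(f')=E_\nu(a,-D)_\nu$ equals $1$ at every place where $D(f')\equiv aD$ has image $-1$ in $\Q_\nu^*/(\Q_\nu^*)^2$. Away from $S\cup\{\ell\}$ this is automatic, because there both $a$ and $D$ are $\nu$-adic units up to squares, so $(a,-D)_\nu=1$; at the auxiliary prime $\ell$ it follows from the choice that $-D$ be a square mod $\ell$; and at the finitely many $\nu\in S$ one has to spend the remaining freedom in the local choice of $a_\nu$ --- recall a rank-$3$ form over $\Q_\nu$ still represents all but at most one square class --- to avoid the forbidden coincidence ``$E_\nu(f')=-1$ and $D(f')_\nu=-1$'' at those places. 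Threading a single $a$ through all of these simultaneous local requirements, reconciling ``represented by $f_\nu$'' with this parity-type condition place by place, is the delicate heart of the argument; it is carried out in full in \cite{Serre1973}, Ch.~IV, \S3.2.
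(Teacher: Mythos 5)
The paper does not actually prove this proposition: it is quoted verbatim as Proposition~7 of Chapter~IV of \cite{Serre1973} and used as a black box (together with Lemma~\ref{forms embed}) to derive Lemma~\ref{diff>=3}. So there is no in-paper proof to compare against; what you have written is a reconstruction of Serre's own argument, and as a sketch it is essentially correct. The base cases are right: for $n=2$ the reduction to finding $a$ with $(a,-D)_\nu=E_\nu$ via the existence theorem for rationals with prescribed Hilbert symbols is exactly Serre's route, and hypotheses (1)--(2) of the proposition are precisely the compatibility conditions of that theorem. The inductive step $f=ax^2\perp f'$ with the forced invariants $D(f')\equiv aD$ and $E_\nu(f')=E_\nu\cdot(a,-D)_\nu$ is also the standard one. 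Two remarks. First, you correctly isolate the only delicate point, condition (2) for $f'$ when $n=3$, but you treat it as a separate obstruction from local representability, whereas they are literally the same condition: a rank-$3$ form over $\Q_p$ with invariants $(d,\varepsilon)$ fails to represent $a$ exactly when $a\equiv -d$ and $\varepsilon\ne(-1,-d)_p$, while condition (2) for the rank-$2$ complement can only fail when $a\equiv -d$ at a place with $\varepsilon\ne(-1,-d)_p$, since there $E_p(f')=\varepsilon\cdot(a,-d)_p=\varepsilon\cdot(-1,-d)_p$. Hence one single requirement --- choose $a\not\equiv -D$ in $\Q_\nu^*/\Q_\nu^{*2}$ at the finitely many places where $E_\nu\ne(-1,-D)_\nu$ --- settles both at once, and spelling this out would close the gap you flag instead of deferring it. Second, your auxiliary demand that $-D$ be a square modulo $\ell$ is harmless but unnecessary: at $\ell$ the valuation of $aD$ is odd, so $-aD$ is never a square in $\Q_\ell$ and condition (2) is vacuous there, while $\prod_\nu E_\nu(f')=1$ already follows from Hilbert reciprocity. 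Since in the end you too defer the full verification to \cite{Serre1973}, the net logical status is the same as the paper's; but your sketch is a faithful and correct outline of the proof the paper omits.
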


Combining these statements we obtain the following lemma.

\begin{lemma} \label{diff>=3}
Consider $q_1 \in \mcQ_n$ and $q_2 \in \mcQ_m$. If $m \geq n + 3$ then $q_1 \embed q_2$.
\end{lemma}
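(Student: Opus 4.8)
The plan is to reduce the claim, via Lemma~\ref{forms embed}, to the existence of a positive definite rational quadratic form $X$ of rank $k := m - n \geq 3$ with a prescribed determinant class and prescribed Hasse--Minkowski invariants, and then to produce such an $X$ by invoking the Proposition of Serre (Proposition~7, Chapter~IV of \cite{Serre1973}) stated just above.

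First I would diagonalize $q_1$ and $q_2$ (harmless by Lemma~\ref{isom}) and set $D := D(q_1)D(q_2)$, viewed in $\Q_*/\Q_*^2$, together with, for every $\nu \in \Nu$,
\[
\widehat E_\nu := E_\nu(q_1)\, E_\nu(q_2)\, \bigl(D(q_1), -D(q_2)\bigr)_\nu \in \{\pm 1\}.
\]
By Lemma~\ref{forms embed}, it suffices to find a positive definite $X \in \mcQ_k$ with $D(X) \equiv D \pmod{\Q_*^2}$ and $E_\nu(X) = \widehat E_\nu$ for all $\nu$; since $X$ must be positive definite (the definition of $\embed$ requires $X \in \mcQ_k$), its signature is forced to be $(r, s) = (k, 0)$.

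Next I would verify that the data $\bigl(D, (\widehat E_\nu)_\nu, (k,0)\bigr)$ meets the four hypotheses of that Proposition with $n$ replaced by $k$. Hypothesis~(1): almost all Hilbert symbols are trivial, so $\widehat E_\nu = 1$ for almost all $\nu$, and
\[
\prod_{\nu \in \Nu} \widehat E_\nu = \Bigl(\prod_\nu E_\nu(q_1)\Bigr)\Bigl(\prod_\nu E_\nu(q_2)\Bigr)\Bigl(\prod_\nu (D(q_1), -D(q_2))_\nu\Bigr) = 1,
\]
using the product formula for the Hasse invariant applied to $q_1$ and $q_2$ together with the product formula $\prod_\nu (a,b)_\nu = 1$ for the Hilbert symbol. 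Hypothesis~(2) is vacuous, since it constrains only the ranks $k = 1$ and $k = 2$ while here $k \geq 3$; this is precisely where the assumption $m \geq n+3$ enters. Hypothesis~(3) is immediate: $r = k \geq 0$, $s = 0$, $r + s = k$. Hypothesis~(4): since $q_1, q_2$ are positive definite, $D(q_1), D(q_2) > 0$, hence $D > 0$, so its image in $\R_*/\R_*^2$ is $+1 = (-1)^0 = (-1)^s$; moreover $E_\infty(q_1) = E_\infty(q_2) = 1$ (Hilbert symbols of positive reals are $1$) and $(D(q_1), -D(q_2))_\infty = 1$ because $D(q_1) > 0$, so $\widehat E_\infty = 1 = (-1)^{s(s-1)/2}$.

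The Proposition then yields a rational quadratic form $X$ of rank $k$ and signature $(k,0)$ — that is, $X \in \mcQ_k$ — with $D(X) \equiv D$ and $E_\nu(X) = \widehat E_\nu$ for every $\nu$, and Lemma~\ref{forms embed} gives $q_1 \embed q_2$. The only substantive points are the global product identity in hypothesis~(1) (where the two product formulas must combine exactly) and the archimedean bookkeeping in hypothesis~(4); the structurally essential observation, and the expected crux, is that $k \geq 3$ annihilates hypothesis~(2), which is exactly why the gap $m - n \geq 3$ cannot be relaxed.
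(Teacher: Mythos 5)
Your proposal is correct and is exactly the argument the paper intends: the paper gives no explicit proof, saying only that the lemma follows by ``combining'' Lemma~\ref{forms embed} with Serre's Proposition~7, and your write-up is precisely that combination, with the verification of hypotheses (1)--(4) (including the observation that $k \geq 3$ makes hypothesis (2) vacuous and the archimedean checks) carried out correctly.
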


For every $k$ we define
$$
\lambda_k=
\begin{cases}
k + 1, \quad k \text{ is even; }\\
\left(k + 1\right)/2, \quad k \text{ is odd. }
\end{cases}
$$

We can see that $D\left(S_k\right) = \frac{k+1}{2^k} \equiv \lambda_k \mod \Q_*^2$

\begin{lemma}
	$$E_\nu\left(S_k\right) = \left(k + 1, \lambda_{k-1}\right)_\nu = 
	\begin{cases}
	\left(k + 1, -2\right)_\nu, \quad k \text{ is even; }\\
	\left(k + 1, -1\right)_\nu, \quad k \text{ is odd. }
\end{cases} 
	$$
\end{lemma}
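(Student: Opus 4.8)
My plan is to prove the formula by induction on $k$, using an orthogonal splitting that peels one variable off $S_k$. Since $S_k$ is positive definite, the associated bilinear form on $\Q^k$ is nondegenerate, and its restriction to the span $W$ of the first $k-1$ coordinate vectors is the bilinear form of $S_{k-1}$, again nondegenerate; hence $\Q^k=W\oplus W^{\perp}$ is an orthogonal decomposition with $\dim W^{\perp}=1$, so for $k\ge2$ we get $S_k\eqQ S_{k-1}\perp\langle c\rangle$ for some $c\in\Q$, $c\ne0$. Taking determinants modulo $\Q_*^2$ gives $c\equiv D(S_k)D(S_{k-1})\equiv\lambda_k\lambda_{k-1}$, using the congruence $D(S_m)\equiv\lambda_m\pmod{\Q_*^2}$ recorded just above the lemma. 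For any diagonalization $S_{k-1}\eqQ\langle a_1,\dots,a_{k-1}\rangle$ we then have $S_k\eqQ\langle a_1,\dots,a_{k-1},c\rangle$, so by the defining product formula for $E_\nu$, together with bilinearity of the Hilbert symbol and $(a,a)_\nu=(a,-1)_\nu$,
\[
E_\nu(S_k)=E_\nu(S_{k-1})\cdot\prod_{i}(a_i,c)_\nu=E_\nu(S_{k-1})\cdot(D(S_{k-1}),c)_\nu=E_\nu(S_{k-1})\cdot(\lambda_{k-1},\lambda_k)_\nu\cdot(\lambda_{k-1},-1)_\nu .
\]

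I would then run the induction. The base case $k=1$ is immediate: $S_1=\langle1\rangle$, and $(2,\lambda_0)_\nu=(2,1)_\nu=1$. For the step, assume $E_\nu(S_{k-1})=(k,\lambda_{k-2})_\nu$, substitute into the recursion, and split on the parity of $k$, inserting the explicit values of $\lambda_{k-2},\lambda_{k-1},\lambda_k$ from the definition. For $k$ even one has $\lambda_{k-2}=k-1$, $\lambda_{k-1}=k/2$, $\lambda_k=k+1$; the factor $(k/2,k+1)_\nu$ matches the target $(k+1,\lambda_{k-1})_\nu$, and what is left to check is $(k,k-1)_\nu(k/2,-1)_\nu=(k,-1)_\nu(k/2,-1)_\nu=(2,-1)_\nu=(2,1-2)_\nu=1$ (using $(k,k-1)_\nu=(k,-1)_\nu$, since $k-1=-(1-k)$, and $k^2/2\equiv2$). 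For $k$ odd one has $\lambda_{k-2}=(k-1)/2$, $\lambda_{k-1}=k$, $\lambda_k=(k+1)/2$; all three surviving symbols have $k$ in the first slot, and collecting them yields $E_\nu(S_k)=\bigl(k,-(k^2-1)/4\bigr)_\nu=(k,1-k^2)_\nu$, which equals the target $(k,k+1)_\nu$ because $\bigl(k,(1-k^2)(k+1)\bigr)_\nu=\bigl(k,(1-k)(k+1)^2\bigr)_\nu=(k,1-k)_\nu=1$.

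Finally, the two closed forms in the statement follow from $(k+1,\lambda_{k-1})_\nu$ by the observation $(k+1,-k)_\nu=(k+1,1-(k+1))_\nu=1$: for $k$ even, $(k+1,\lambda_{k-1})_\nu(k+1,-2)_\nu=(k+1,-k)_\nu=1$, so the value is $(k+1,-2)_\nu$; for $k$ odd, $(k+1,\lambda_{k-1})_\nu(k+1,-1)_\nu=(k+1,-k)_\nu=1$, so the value is $(k+1,-1)_\nu$.

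The only real care is needed in the parity bookkeeping of the inductive step --- keeping straight which square class each $\lambda_m$ represents and which Hilbert symbols survive the regrouping --- but this is not a conceptual obstacle: the whole argument is powered by the single identity $(a,1-a)_\nu=1$ plus the orthogonal-splitting recursion. One could instead expand $\prod_{i<j}(c_i,c_j)_\nu$ over an explicit diagonalization such as $2S_k\eqQ\langle 2/1,3/2,\dots,(k+1)/k\rangle$, but that product essentially has to be telescoped back into the recursion above, so induction is the cleaner route.
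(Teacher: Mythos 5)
Your proof is correct and follows essentially the same route as the paper: both arguments induct on $k$ starting from $S_1=\langle 1\rangle$, use the orthogonal splitting $S_k \eqQ S_{k-1}\perp\langle D(S_k)D(S_{k-1})\rangle$ to get the recursion $E_\nu(S_k)=E_\nu(S_{k-1})\cdot\left(\lambda_{k-1},\lambda_{k-1}\lambda_k\right)_\nu$, and then reduce the remaining Hilbert symbols via $(a,1-a)_\nu=1$ and a parity split on $k$. The only cosmetic difference is bookkeeping: the paper compresses the parity analysis into the single identity $(\lambda_k,2)_\nu=(k+1,2)_\nu$, while you split into the even and odd cases up front and verify the two closed forms at the end.
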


\begin{proof}
We prove this statement by induction over $k$.
For $k = 1$ we get $E_\nu\left(S_k\right) = 1$ and $\left(2, -2\right)_\nu = 1$. Then for every $k$ we get

$$E_\nu\left(S_{k+1}\right) = E_\nu\left(S_k\right) \cdot \left(D\left(S_k\right), \frac{k+1}{2\left(k+2\right)}\right)_\nu = E_\nu\left(S_k\right) \cdot \left(\lambda_k, \frac{\left(k+2\right)\left(k+1\right)}{2}\right)_\nu = $$
$$ = \left(k + 1, \lambda_{k-1}\right)_\nu \cdot \left(\lambda_k, \frac{\left(k+1\right)\left(k+2\right)}{2}\right)_\nu = \left(k + 1, \lambda_{k}\right)_\nu \cdot \left(k + 1, \lambda_{k-1}\lambda_k\right)_\nu \cdot \left(\lambda_k, \frac{\left(k+1\right)\left(k+2\right)}{2}\right)_\nu = $$

$$= \left(\lambda_k, \frac{k+2}{2}\right)_\nu \cdot \left(k + 1, \frac{k\left(k+1\right)}{2}\right)_\nu = \left(\lambda_k, \frac{k+2}{2}\right)_\nu \cdot \left(k + 1, 2\right)_\nu = \left(\lambda_k, k+2\right)_\nu \cdot \left(\lambda_k,2\right)_\nu \cdot \left(k + 1, 2\right)_\nu$$

Thus we need to check for every $k$ that $\left(\lambda_k, 2\right)_\nu = \left(k + 1, 2\right)_\nu$.

If $k$ is even then this equality holds because $\lambda_k = k + 1$.

If $k$ is odd then $\left(\lambda_k, 2\right)_\nu = \left(\frac{k+1}{2}, 2\right)_\nu = \left(k + 1, 2\right)_\nu \cdot \left(2, 2\right)_\nu$. Now we only have to use the fact that $\left(2, 2\right)_\nu = 1$ holds for every $\nu \in \Nu$.

\end{proof}




\bibliographystyle{ieeetr}
\bibliography{main}


\end{document}